\newtheorem{theorem}{Theorem}
\newtheorem{lemma}[theorem]{Lemma}
\newtheorem{corollary}[theorem]{Corollary}
\newtheorem{proposition}[theorem]{Proposition}
\newcommand{\haref}[2]{\href{https://hal.archives-ouvertes.fr/hal-#1}{hal-#1} \& \href{http://arxiv.org/abs/#2}{arXiv:#2}}
\newcommand{\be}[1]{\begin{equation}\label{#1}}
\newcommand{\ee}{\end{equation}}
\renewcommand{\(}{\left(}
\renewcommand{\)}{\right)}
\newcommand{\R}{{\mathbb R}}
\renewcommand{\P}{\mathsf P}
\newcommand{\sphere}{{\mathbb S^{d-1}}}
\newcommand{\ird}[1]{\int_{\R^d}{#1}\,dx}
\newcommand{\nrm}[2]{\left\|#1\right\|_{#2}}
\newcommand{\D}{\mathsf D_\alpha\kern0.5pt}
\newcommand{\DD}[1]{\D #1}
\newcommand{\Dstar}{\mathsf D_\alpha^*\kern1pt}
\newcommand{\iBR}[1]{\int_{B_R}{#1}\,dx}
\newcommand{\idBR}[1]{\int_{\partial B_R}{#1}\,d\sigma}
\newcommand{\iwR}[1]{\int_{B_R}{#1}\,d\mu_n}
\newcommand{\iwrd}[1]{\int_{\R^d}{#1}\,d\mu_n}
\newcommand{\isph}[1]{\int_{\sphere}#1\,d\omega}
\newcommand{\scal}[2]{\langle#1,#2\rangle}
\newcommand{\Scal}[2]{\langle\!\langle#1,#2\rangle\!\rangle}
\newcommand{\C}{{\mathsf C}}
\newcommand{\Balpha}{\mathcal B_\alpha}
\title[Nonlinear flows, boundary terms \& linearization]{Interpolation inequalities, nonlinear flows, boundary terms, optimality and linearization}
\author[J.~Dolbeault]{Jean Dolbeault}
\address{\hspace*{-12pt}J.~Dolbeault: CEREMADE (CNRS UMR n$^\circ$ 7534), PSL research university, Universit\'e Paris-Dauphine, Place de Lattre de Tassigny, 75775 Paris 16, France}
\email{dolbeaul@ceremade.dauphine.fr}
\author[M.J.~Esteban]{Maria J.~Esteban}
\address{\hspace*{-12pt}M.J.~Esteban: CEREMADE (CNRS UMR n$^\circ$ 7534), PSL research university, Universit\'e Paris-Dauphine, Place de Lattre de Tassigny, 75775 Paris 16, France}
\email{esteban@ceremade.dauphine.fr}
\author[M.~Loss]{Michael Loss}
\address{\hspace*{-12pt}M.~Loss: School of Mathematics, Georgia Tech, 686 Cherry St., Atlanta, GA, 30332-0160, USA}
\email{loss@math.gatech.edu}
\begin{document}
\begin{abstract} This paper is devoted to the computation of the asymptotic boundary terms in entropy methods applied to a fast diffusion equation with weights associated with Caffarelli-Kohn-Nirenberg interpolation inequalities. So far, only elliptic equations have been considered and our goal is to justify, at least partially, an extension of the \emph{carr\'e du champ} / Bakry-Emery / R\'enyi entropy methods to parabolic equations. This makes sense because evolution equations are at the core of the heuristics of the method even when only elliptic equations are considered, but this also raises difficult questions on the regularity and on the growth of the solutions in presence of weights.

We also investigate the relations between the optimal constant in the entropy -- entropy production inequality, the optimal constant in the information -- information production inequality, the asymptotic growth rate of generalized R\'enyi entropy powers under the action of the evolution equation and the optimal range of parameters for symmetry breaking issues in Caffarelli-Kohn-Nirenberg inequalities, under the assumption that the weights do not introduce singular boundary terms at $x=0$. These considerations are new even in the case without weights. For instance, we establish the equivalence of carr\'e du champ and R\'enyi entropy methods and explain why entropy methods produce optimal constants in entropy -- entropy production and Gagliardo-Nirenberg inequalities in absence of weights, or optimal symmetry ranges when weights are present.\end{abstract}
\keywords{Caffarelli-Kohn-Nirenberg inequalities; Gagliardo-Nirenberg inequalities; weights; optimal functions; symmetry; symmetry breaking; optimal constants; improved inequalities; parabolic flows; fast diffusion equation; self-similar solutions; asymptotic behavior; intermediate asymptotics; rate of convergence; entropy methods; \emph{carr\'e du champ;} R\'enyi entropy powers; entropy -- entropy production inequality; self-similar variables; bifurcation; instability; rigidity results; linearization; spectral estimates; spectral gap; Hardy-Poincar\'e inequality}

\subjclass[2010]{Primary: 35K55, 35B06; Secondary: 49K30, 35J60, 35J20.}

\maketitle
\thispagestyle{empty}
\vspace*{-0.75cm}

\section{Introduction}\label{Sec:Intro}

In this paper we consider the Gagliardo-Nirenberg inequality
\be{GN}
\nrm{\nabla w}2^\theta\,\nrm w{q+1}^{1-\theta}\ge\mathsf C_{\mathrm{GN}}\,\nrm w{2q}\quad\forall\,w\in C_0^\infty(\R^d)
\ee
in relation with the nonlinear diffusion equation in $\R^d$, $d\ge1$,
\be{poro}
\frac{\partial v}{\partial t}=\Delta v^m\,,\quad(t,x)\in\R^+\times\R^d\,,
\ee
in the fast diffusion regime $m\in[1-1/d,1)$.

We also consider more general interpolation inequalities with weights. With the norm defined by
\[
\nrm w{q,\gamma}:=\(\ird{|w|^q\,|x|^{-\gamma}}\)^{1/q}\,,
\]
which extends the case without weight $\nrm wq=\nrm w{q,0}$, let us consider the family of \emph{Caffarelli-Kohn-Nirenberg interpolation inequalities} given by
\be{CKN}
\nrm w{2p,\gamma}\le\C_{\beta,\gamma,p}\,\nrm{\nabla w}{2,\beta}^\vartheta\,\nrm w{p+1,\gamma}^{1-\vartheta}
\ee
in a suitable functional space $\mathrm H^p_{\beta,\gamma}(\R^d)$ obtained by completion of smooth functions with compact support in $\R^d\setminus\{0\}$, w.r.t.~the norm given by $\|w\|^2:=(p_\star-p)\,\nrm w{p+1,\gamma}^2+\nrm{\nabla w}{2,\beta}^2$. Here $\C_{\beta,\gamma,p}$ denotes the optimal constant, the parameters $\beta$, $\gamma$ and~$p$ are subject to the restrictions
\be{parameters}
d\ge2\,,\quad\gamma-2<\beta<\frac{d-2}d\,\gamma\,,\quad\gamma\in(-\infty,d)\,,\quad p\in\(1,p_\star\right]\quad\mbox{with}\,p_\star:=\frac{d-\gamma}{d-\beta-2}
\ee
and the exponent $\vartheta$ is determined by the scaling invariance, \emph{i.e.},
\[\label{theta}
\vartheta=\frac{(d-\gamma)\,(p-1)}{p\,\big(d+\beta+2-\,2\,\gamma-p\,(d-\beta-2)\big)}\,.
\]
These inequalities have been introduced, among other inequalities, by L.~Caffarelli, R.~Kohn and L.~Nirenberg in~\cite{Caffarelli-Kohn-Nirenberg-84}. The evolution equation associated with~\eqref{CKN} is the weighted nonlinear diffusion equation
\be{FD}
v_t=|x|^\gamma\,\nabla\cdot\(\,|x|^{-\beta}\,\nabla v^m\)\,,\quad(t,x)\in\R^+\times\R^d\,,
\ee
with exponent $m=\frac{p+1}{2\,p}\in[m_1,1)$ where
\[\label{m1}
m_1:=\frac{2\,d-2-\beta-\gamma}{2\,(d-\gamma)}\,.
\]
Details about the existence of solutions for the above evolution equation and their properties can be found in~\cite{1602}.

Our first goal is to give a proof of~\eqref{CKN} with an integral remainder term using~\eqref{FD} whenever the optimal function in~\eqref{CKN} is radially symmetric. This requires some parabolic estimates. As in the elliptic proof of~\eqref{CKN} given in~\cite{DEL2015} and~\cite{DELM}, the main difficulty arises from the justification of the integrations by parts. We also investigate why the method provides the optimal constant in~\eqref{GN} and the optimal range of symmetry in~\eqref{CKN}.

\subsection{The symmetry breaking issue}

Equality in~\eqref{CKN} is achieved by Aubin-Talenti type functions
\[
w_\star(x)=\(1+|x|^{2+\beta-\gamma}\)^{-1/(p-1)}\quad\forall\,x\in\R^d
\]
if we know that \emph{symmetry} holds, that is, if we know that the equality is achieved among radial functions. In this case it is not very difficult to check that $w_\star$ is the unique radial critical point, up to the transformations associated with the invariances of the equation. Of course, any element of the set of functions generated by the dilations and the multiplication by an arbitrary constant is also optimal if $w_\star$ is optimal. Conversely, there is \emph{symmetry breaking} if equality in~\eqref{CKN} is not achieved among radial functions.

Deciding whether symmetry or symmetry breaking holds is a central problem in physics, and it is also a difficult mathematical question. It is well known that symmetric energy functionals may have states of lowest energy that may or may not have these symmetries. In our example~\eqref{CKN} the weights are radial and the functional is invariant under rotation. In the language of physics, a \emph{broken} symmetry means that the symmetry group of the minimizer is strictly smaller than the symmetry group of the functional. For computing the optimal value of the functional it is of great advantage that an optimizer is symmetric. The optimal constant $\C_{\beta,\gamma,p}$ can then be explicitly computed in terms of the $\Gamma$ function. Otherwise, this is a difficult question which has only numerical solutions so far and involves a delicate energy minimization as shown in~\cite{MR3043639,MR3168260}. In other contexts the breaking of symmetry leads to various interesting phenomena and this is why it is important to decide what symmetry types, if any, an optimizer has. Our problem is a model case, in which homogeneity and scaling properties are essential to obtain a clear-cut answer to these symmetry issues.

To show that symmetry is broken in~\eqref{CKN}, one can minimize the associated functional in the \emph{class of symmetric functions} and then check whether the value of the functional can be lowered by perturbing the minimizer away from the symmetric situation. This is the standard method, and it has been used to establish that \emph{symmetry breaking} occurs in~\eqref{CKN}~if
\be{set-symm-breaking}
\gamma<0\quad\mbox{and}\quad\beta_{\rm FS}(\gamma)<\beta<\frac{d-2}d\,\gamma
\ee
where
\[
\beta_{\rm FS}(\gamma):=d-2-\sqrt{(\gamma-d)^2-4\,(d-1)}\,.
\]
In the critical case $p=p_\star$, the method was implemented by F.~Catrina and Z.-Q.~Wang in~\cite{Catrina-Wang-01}, and the sharp result was obtained by V.~Felli and M.~Schneider in~\cite{MR2017240}. The same condition was recently obtained in the sub-critical case $p<p_\star$, in~\cite{1602}. Here by \emph{critical} we simply mean that $\nrm w{2p,\gamma}$ scales like $\nrm{\nabla w}{2,\beta}$. One has to observe that proving symmetry breaking by establishing the linear instability is a \emph{local} method, which is based on a painful but rather straightforward linearization around the special function $w_\star$.

When the minimizer in the symmetric class is stable, \emph{i.e.}, all local perturbations that break the symmetry (in our case, non-radial perturbations) increase the energy, the problem to decide whether the optimizer is symmetric, is much more difficult. It is obvious that, in general, one cannot conclude that the minimizer is symmetric by using a local perturbation, because the minimizer in the symmetric class and the actual minimizer might not be close even in any notion of distance adapted to the functional space $\mathrm H^p_{\beta,\gamma}(\R^d)$. In general it is extremely difficult to decide, assuming stability, wether the minimizer is symmetric or not. This is a \emph{global} problem and not amenable to linear methods.

One general technique for establishing symmetry of optimizers are {\it rearrangement inequalities} and the {\it moving plane method}. These methods, however, can only be applied for functionals that are in one way or another related to the isoperimetric problem. Outside this context there are no general techniques available for understanding the symmetry of minimizers. This is quite obvious when the weights and the nonlinearity do not cooperate to decrease the energy under symmetrization and in these cases moving planes and related comparison techniques fail. As usual in nonlinear analysis, advances have always been made by studying relevant and non-trivial examples, such as finding the sharp constant in Sobolev's inequality~\cite{Aubin-76,Talenti-76}, the Hardy-Littlewood-Sobolev inequality~\cite{Lieb-83} or the logarithmic Sobolev inequality~\cite{Gross75}, to mention classical examples. In all these cases symmetrization and the moving plane methods work. Likewise, these techniques can be applied, in the case of Caffarelli-Kohn-Nirenberg inequalities, to prove that symmetry holds if $p=p_\star$ and $\beta>0$. In fact using symmetrization methods, fairly good ranges have been achieved in~\cite{MR1734159}. The results, however, are not optimal and can be improved by direct energy and spectral estimates as in~\cite{DEL2011}. Various perturbation techniques have also been implemented, as in~\cite{DELT09,DMN2015}, to extend the region of the parameters for which symmetry is known but the method, at least in~\cite{DELT09} and related papers, is not constructive. To establish the optimal symmetry range in~\eqref{CKN}, and thus determine the sharp constant in the Caffarelli-Kohn-Nirenberg inequalities, a new method had to be designed. What has been proved in~\cite{DEL2015} in the critical case $p=p_\star$, and extended in~\cite{DELM} to the sub-critical case $1<p<p_\star$, is that the symmetry breaking range given in~\eqref{set-symm-breaking} is optimal, \emph{i.e.}, symmetry holds in the region of admissible parameters that is complementary to the region in which symmetry breaking was established.

The strategy used in~\cite{DEL2015,DELM} to prove symmetry in the desired parameter range consists of perturbing the functional about the (unknown) critical point in a particular direction. Notwithstanding what has been said before about perturbations being local, the direction depends in a non-linear fashion on the critical point. It turns out that this perturbation vanishes precisely if the critical point is a radial optimizer. Of course, this begs the question how this direction can be found. In the case at hand it turns out that the functional is monotone under the action of a particular non-linear flow, and the derivative of the functional at a critical point turns out to be strictly negative unless the critical point is a radial optimizer. In carrying out this program one has to perform integration by parts and a good deal of work enters in proving the necessary regularity properties of the critical points that justify these computations.

A more appealing possibility is to use the fact that the non-linear flow, written in suitable variables, converges to a Barenblatt profile. Starting with any reasonable initial condition one would, as above, differentiate the functional along the flow and, in a formal fashion, see that that the functional decreases as time tends to infinity towards its minimal value. In addition to having an intuitive approach, one would potentially obtain correction terms to the inequality. This can be carried out, but so far the corresponding computations are formal because they rely on various integrations by parts that have to be justified. It is the first purpose of this paper to (partially) fill this gap and establish the optimal symmetry range using the full picture of entropy methods, at least as far as integration by parts on unbounded domains is concerned. In the case of non constant coefficients, the problems that might arise when dealing with the singularities of the weights at $x=0$ poses additional difficulties which are not studied in this paper, so that our results are still formal in the weighted case. But at least we make what we think is a significant step towards a complete parabolic proof.

Additionally a method based on a parabolic flow provides for free an integral remainder term, and sheds a fresh light on the method used in~\cite{DEL2015,DELM}. The results in~\cite{DEL2015,DELM} are surprising in the sense that the locally stable radial optimizers are precisely the global optimizers. From the flow perspective, however, this can be understood, because stationarity under the flow characterizes all critical points. The flow monotonously decreases the functional associated with~\eqref{CKN}: this also explains why we are able to extend a local property (the linear stability of radial solutions) to a global stability result (the uniqueness, up to the invariances, of the critical point).

The parabolic approach is based on an inequality between the \emph{Fisher information} and its time derivative, \emph{i.e.}, the \emph{production of Fisher information}, and provides us with the optimal range of symmetry. This is a remarkable fact, common to various nonlinear diffusion equations, that can be explained as follows. When there are no weights, the optimality in the entropy -- entropy production inequality is achieved through a linearization which also provides us with large-time asymptotic rates of convergence. As a consequence the best constant in the inequality is equal to the optimal constant which arises from the computation of Fisher information -- production of Fisher information~(see~\cite{MR3155209,juengel2016entropy}), and which is also reached in the large-time asymptotics. With general weights, the picture is actually slightly more complex, as discussed in~\cite{1602,1601}, but by studying the large time asymptotics, one can at least understand why the optimal symmetry range is achieved in our flow approach. This will be detailed in the last section of this paper. One more comment has to be done at this point. Quite generally computations based on the second derivative of an entropy with respect to the time, along a flow, are known as Bakry-Emery or \emph{carr\'e du champ} methods. The geometry or the presence of an external potential usually allows us to relate a positivity estimate of the curvature or a uniform convexity bound of the potential with a rate of decay of the Fisher information. In the \emph{R\'enyi entropy powers} approach, as can be seen from~\cite{1501}, there is no such bound neither on the curvature nor on the potential: what matters is only the fact that we apply a nonlinear flow to some nonlinear quantities. The interplay of the various quantities that are generated by integrations by parts and hitting powers of functions when taking derivatives delivers nontrivial coefficients that allows to relate the Fisher information with its time derivative, \emph{i.e.}, the production of Fisher information. As explained below, in order to control the boundary terms, we are dealing with the more classical setting of \emph{relative entropies} and \emph{self-similar variables}. By making the link with R\'enyi entropy powers, we finally get rid of any geometry or convexity requirements on an external potential. Although this is a side remark of our paper, we believe that it is of interest by itself.

In~\cite{DEL2015,DELM}, we analyzed the symmetry properties not only of the extremal functions of~\eqref{CKN}, but also of all positive solutions in $\mathrm H^p_{\beta,\gamma}(\R^d)$ of the corresponding Euler-Lagrange equations, \emph{i.e.}, up to a multiplication by a constant and a dilation, of
\be{ELeq}
-\,\mbox{div}\,\big(|x|^{-\beta}\,\nabla w\big)=|x|^{-\gamma}\,\big(w^{2p-1}-\,w^p\big)\quad\mbox{in}\quad\R^d\setminus\{0\}\,.
\ee
\begin{theorem}\label{Thm:Rigidity}~\cite{DEL2015,DELM} {\sl Under Condition~\eqref{parameters} assume that
\be{Symmetry condition}
\mbox{either}\quad\beta\le\beta_{\rm FS}(\gamma)\quad\forall\,\gamma<0\,,\quad\mbox{or}\quad\gamma\ge0\,.
\ee
Assume that $d\ge2$ and $(\beta,\gamma)\neq(0,0)$. Then all positive solutions of~\eqref{ELeq} in $\mathrm H^p_{\beta,\gamma}(\R^d)$ are radially symmetric and, up to a scaling and a multiplication by a constant, equal to $w_\star$.}\end{theorem}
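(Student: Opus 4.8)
The strategy I would follow is the \emph{carr\'e du champ} / nonlinear flow method in the form used in~\cite{DEL2015,DELM}, exploiting that a positive solution $w$ of~\eqref{ELeq} is, up to the scaling and multiplicative invariances, exactly a stationary profile of the weighted fast diffusion equation~\eqref{FD} written in self-similar variables. Concretely, I would work with $v=w^{2p}$ and its \emph{pressure} variable $\mathsf P$ (proportional to $v^{m-1}$, with $m=\frac{p+1}{2p}$), for which the self-similar flow becomes a weighted porous-medium-type equation with a confining term whose stationary solution is an Aubin--Talenti profile. It is convenient to introduce the effective dimension $n=\frac{2\,(d-\gamma)}{\beta+2-\gamma}$ and pass to the corresponding radial and angular coordinates, so that the weighted operator reads as a Laplace--Beltrami-type operator on a cone. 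The relative Fisher information $\mathcal I[v]$ is the natural quantity: rather than running a genuine parabolic flow (which is the point of the rest of this paper and requires parabolic estimates), in the elliptic proof one multiplies~\eqref{ELeq} by the nonlinear test function obtained by applying the formal generator of the flow to $\mathsf P$, and integrates.

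The heart of the computation is then a Bochner / commutator identity. After the integrations by parts, $-\frac{d}{dt}\mathcal I[v]-\lambda\,\mathcal I[v]$ (evaluated along the flow, equivalently the quantity produced by the elliptic test-function computation) is expressed as an integral, against the weight, of a sum of manifestly nonnegative quantities: the squared norm of the trace-free Hessian of $\mathsf P$ in the metric adapted to the weights, plus angular terms of the schematic form $c\,|\nabla_\omega \mathsf P|^2/|x|^2$ and $|\nabla_\omega(\ldots)|^2$. The decisive point is that, after integrating the angular contributions by parts on $\sphere$ and inserting the explicit value of $m$, the remaining angular coefficient $c$ is nonnegative \emph{precisely} under Condition~\eqref{Symmetry condition}: the Felli--Schneider line $\beta=\beta_{\rm FS}(\gamma)$ is exactly the threshold at which the relevant angular spectral gap degrades, and the exclusion $(\beta,\gamma)\neq(0,0)$ is what rules out the degenerate borderline.

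Granting this positivity, a stationary solution $w$ must make every term vanish: the trace-free Hessian of $\mathsf P$ is zero, so $\mathsf P$ is affine in the weighted coordinates, i.e.\ an Aubin--Talenti profile after undoing the change of variables; and $\nabla_\omega \mathsf P\equiv0$, so $w$ is radial. Matching with the normalization then forces $w=w_\star$ up to a scaling and a multiplicative constant, which is Theorem~\ref{Thm:Rigidity}.

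The genuine obstacle is not the algebra but the rigor: the integrations by parts above must be legitimate for \emph{every} positive $w\in\mathrm H^p_{\beta,\gamma}(\R^d)$, not merely for the optimizer. This requires (i) elliptic regularity and strict positivity away from $x=0$ (Moser/Harnack estimates for operators with Muckenhoupt weights), (ii) sharp control of $w$ and $\nabla w$ near the singular point $x=0$, where the weights degenerate or blow up, so that the boundary terms on $\partial B_\varepsilon$ vanish as $\varepsilon\to0$, and (iii) matching decay of $w$ at infinity so that the boundary terms on $\partial B_R$ vanish as $R\to\infty$. In practice one carries out the whole identity on the annulus $B_R\setminus B_\varepsilon$, collects all the boundary integrals, and shows each one tends to $0$; the contribution localized at $x=0$ is the delicate one, and --- as the introduction notes --- it is precisely what keeps the fully \emph{parabolic} version formal in the weighted case.
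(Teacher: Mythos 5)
Your proposal matches the approach the paper attributes to \cite{DEL2015,DELM}: the elliptic multiplier computation inspired by the flow, the effective dimension $n$, the Bochner-type decomposition into the trace-free Hessian term plus angular terms whose sign hinges on the Felli--Schneider threshold, and the care required at $x=0$ and at infinity (the paper later rederives the same conclusion from Theorem~\ref{Thm:Monotonicity} by noting $\frac d{dt}\mathsf G[v(t,\cdot)]|_{t=0}=0$ forces $\mathsf R[v_0]=0$, but that is a reformulation of the same mechanism). One small inaccuracy: the restriction $(\beta,\gamma)\neq(0,0)$ is not there to ``rule out the degenerate borderline'' of the spectral gap, but because at $(\beta,\gamma)=(0,0)$ the problem acquires translation invariance, so the conclusion ``unique up to scaling and multiplication by a constant'' would have to also allow translations.
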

Theorem~\ref{Thm:Rigidity} determines the optimal symmetry range, as shown by~\eqref{set-symm-breaking}. Our first result is actually a more precise version of Theorem~\ref{Thm:Rigidity}, under a regularity assumption at $x=0$ that still has to be proved.

\subsection{Main result}\label{Sec:Main}

The \emph{R\'enyi entropy power} functional relates~\eqref{poro} with~\eqref{GN}. We adopt a similar approach in the weighted case. Let us consider the derivative of the \emph{generalized R\'enyi entropy power} functional, defined up to a multiplicative constant as
\[
\mathsf G[v]:=\(\ird{v^m\,|x|^{-\gamma}}\)^{\sigma-1}\ird{v\,|\nabla\P|^2\,|x|^{-\beta}}
\]
where $\P:=\frac m{1-m}\,v^{m-1}$ is the \emph{pressure variable}, $\sigma:=\frac{m-m_c}{1-m}$ and $m_c:=\frac{d-2-\beta}{d-\gamma}$. The exponent $m$ is the one which appears in~\eqref{FD}, and it is such that $m\in[m_1,1)$. With
\[
n:=2\,\frac{d-\gamma}{\beta+2-\gamma}\,,
\]
we observe that $m_1=1-1/n$, so that, when $(\beta,\gamma)=(0,0)$, we have $n=d$ and $m_1=1-1/d$. As we will see later using scalings, $n$ plays the role of a \emph{dimension}. With $\mu_\star:=(d-\gamma)\,(m-m_c)$, $\mu:=n\,(m-m_c)$, $\kappa:=\big(\frac{2\,m}{1-m}\big)^{1/\mu}$, the function
\be{SelfSim}
v_\star(t,x):=\frac1{\kappa^n(\mu\,t)^{n/\mu}}\,\mathcal B_\star\!\(\frac x{\kappa^{\mu/\mu_\star}\,(\mu\,t)^{1/\mu_\star}}\)\quad\mbox{where}\quad\mathcal B_\star(x):=\(1+|x|^{2+\beta-\gamma}\)^{-1/(1-m)}\quad\forall\,x\in\R^d\,,\forall\,t>0
\ee
is a self-similar solution of~\eqref{FD}. Here $\mathcal B_\star$ is the Barenblatt profile with mass $M_\star:=\ird{\mathcal B_\star\,|x|^{-\gamma}}=\nrm{\mathcal B_\star}{1,\gamma}$. A simple computation shows that $\mathsf G[v_\star]$ does not depend on $t>0$ and $\nrm{v_\star}{1,\gamma}=M_\star$. Theorem~\ref{Thm:Rigidity} is equivalent to prove that
\[
\mathsf G[v]\ge\mathsf G[v_\star]
\]
for any nonnegative function $v$ such that $\nrm v{1,\gamma}=\nrm{v_\star}{1,\gamma}=M_\star$, if~\eqref{parameters} and~\eqref{Symmetry condition} hold.

In~\cite{DEL2015,DELM} we proved Theorem~\ref{Thm:Rigidity} using elliptic methods and well chosen multipliers inspired by the heuristics arising from the parabolic equation~\eqref{FD}. However, so far, we were not able to deal with the time-depen\-dent solution by lack of estimates for justifying integrations by parts and this is why we only worked with the elliptic equation. In this paper we study the evolution problem. When $(\beta,\gamma)\neq(0,0)$, we are not yet able to deal with the possible singularities of the solutions to~\eqref{FD} at the origin, but otherwise we can handle all integrations by parts. The method is based on the approximation of the solution on a ball in self-similar variables, with no-flux boundary conditions on the boundary, and then by letting the radius of the ball go to infinity. By using parabolic methods to prove Theorem~\ref{Thm:Rigidity}, we obtain improvements of~\eqref{GN} and~\eqref{CKN}, with a remainder term computed as an integral term along the flow.

Let us define
\be{hmu}
\mathsf h(t):=\(1+\frac{2\,m}{1-m}\,\mu\,t\)^{1/\mu}\quad\forall\,t\ge0\,,\quad\mbox{with}\quad\mu=2\,\frac{2+\beta-d+m\,(d-\gamma)}{2+\beta-\gamma}\,.
\ee
\begin{theorem}\label{Thm:Monotonicitylight} Let $d\ge2$. Under Condition~\eqref{parameters}, if
\[
\mbox{either}\quad\beta<\beta_{\rm FS}(\gamma)\quad\forall\,\gamma\le0\,,\quad\mbox{or}\quad\gamma>0\,,
\]
then there exists a positive constant $\mathcal C$ depending only on $\beta$, $\gamma$ and $d$ such that the following property holds. If~$v_0$ satisfies $\nrm{v_0}{1,\gamma}=M_\star$ and if there exist two positive constants $C_1$ and $C_2$ such that
\be{entredeuxBarenblatt}
\(C_1+|x|^{2+\beta-\gamma}\)^{-1/(1-m)}\le v_0(x)\le\(C_2+|x|^{2+\beta-\gamma}\)^{-1/(1-m)}\quad\forall\,x\in\R^d\,,
\ee
then for any positive solution of~\eqref{FD} with initial datum $v_0$ we have
\[
\mathsf G[v(t,\cdot)]\ge\mathsf G[v_\star]+\,\mathcal C\int_t^\infty\mathsf h(s)^{3\,\mu-2}\ird{v^m(s,x)\,\frac{|\nabla_\omega v^{m-1}(s,x)|^2}{|x|^4}\,|x|^{\gamma-2\beta}}\;ds\quad\forall\,t\ge0\,,
\]
if $v$ is \emph{smooth at $x=0$} for any $t\ge0$. Here $\mathsf h$ and $\mu$ are defined by~\eqref{hmu}.\end{theorem}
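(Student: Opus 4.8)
\emph{Strategy.} The plan is to run the \emph{carr\'e du champ} computation for~\eqref{FD} on balls with no-flux boundary conditions and then to pass to the limit of large balls. First I would rewrite $v$ in self-similar variables adapted to the Barenblatt solution~\eqref{SelfSim}: this turns~\eqref{FD} into a weighted Fokker--Planck equation whose unique stationary solution of mass $M_\star$ is $\mathcal B_\star$, and it makes $\mathsf G[v(t,\cdot)]$ equal, up to a fixed multiplicative constant, to a generalized \emph{Fisher information} $\mathsf I[u(\tau,\cdot)]$ of the rescaled solution $u$ relative to $\mathcal B_\star$, with $\tau$ a logarithmic time. The sandwich assumption~\eqref{entredeuxBarenblatt} becomes a two-sided bound of the rescaled datum by two Barenblatt-type profiles, and the comparison principle for the (degenerate parabolic) Fokker--Planck equation — applied with the one-parameter family of stationary Barenblatt-type super- and subsolutions — propagates the same two-sided bound, with time-independent constants, for all $\tau\ge0$. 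These uniform bounds are what will make every integration by parts and every limit below legitimate; the smoothness of $v$ at $x=0$ is used precisely to prevent the origin from producing spurious boundary terms.

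\emph{Carr\'e du champ on a ball.} On $B_R$, with the no-flux condition for the Fokker--Planck equation on $\partial B_R$, I would differentiate $\mathsf I[u(\tau,\cdot)]$ in $\tau$ and integrate by parts twice, following the elliptic computation of~\cite{DEL2015,DELM}. The boundary terms on $\partial B_R$ either vanish because of the no-flux condition or carry a favourable sign, leaving an identity of the schematic form
\[
\frac{d}{d\tau}\,\mathsf I[u]=-\,\mathcal R_{\mathrm{rad}}[u]-\,\mathcal R_{\mathrm{ang}}[u]\,,
\]
where $\mathcal R_{\mathrm{rad}}[u]$ is the sum of a nonnegative Hessian-type square and of a quadratic term whose coefficient is nonnegative precisely under the parameter condition assumed here (a slightly stronger form of~\eqref{Symmetry condition}) — the same algebraic inequality relating $n$, $\beta$ and $\gamma$ that underlies Theorem~\ref{Thm:Rigidity} — while $\mathcal R_{\mathrm{ang}}[u]$ is a positive multiple of the angular integral $\int_{B_R}u^m\,|\nabla_\omega u^{m-1}|^2\,|y|^{\gamma-2\beta-4}\,dy$. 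Dropping the nonnegative term $\mathcal R_{\mathrm{rad}}[u]$ leaves
\[
\frac{d}{d\tau}\,\mathsf I[u(\tau,\cdot)]\le-\,\mathcal R_{\mathrm{ang}}[u(\tau,\cdot)]\le0\qquad\text{on }B_R\,.
\]

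\emph{Limit $R\to\infty$ and time integration.} Using the uniform two-sided Barenblatt bounds and interior parabolic regularity, all integrals converge as $R\to\infty$ to their counterparts on $\R^d$ while the $\partial B_R$ contributions vanish, so the differential inequality holds on $\R^d$. Integrating it from $\tau$ to $+\infty$ and using $\mathsf I[u(\tau,\cdot)]\to\mathsf I[\mathcal B_\star]$ as $\tau\to+\infty$ — convergence to the Barenblatt profile, which follows under the same parameter condition from the entropy--entropy production inequality together with the sandwich bounds — gives
\[
\mathsf I[u(\tau,\cdot)]\ge\mathsf I[\mathcal B_\star]+\int_\tau^\infty\mathcal R_{\mathrm{ang}}[u(s,\cdot)]\,ds\,.
\]
Undoing the self-similar change of variables identifies $\mathsf I[u]$ with $\mathsf G[v]$ and $\mathsf I[\mathcal B_\star]$ with $\mathsf G[v_\star]$ (up to the same fixed constant), while the spatial Jacobian and the time change $d\tau/dt$ recombine the angular integral into $\mathsf h(s)^{3\,\mu-2}\int_{\R^d}v^m\,|\nabla_\omega v^{m-1}|^2\,|x|^{-4}\,|x|^{\gamma-2\beta}\,dx$; absorbing the numerical constants into $\mathcal C>0$ yields the claim.

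\emph{Main obstacle.} The hard part is not the \emph{carr\'e du champ} algebra, which adapts~\cite{DEL2015,DELM}, but the analysis that legitimizes it: propagating the two-sided Barenblatt bound by comparison for the degenerate weighted parabolic equation, deducing from it enough uniform regularity to differentiate $\mathsf I$ under the integral sign and to integrate by parts on $B_R$, and controlling the $\partial B_R$ terms so that they disappear as $R\to\infty$. This is exactly where the smoothness-at-$x=0$ assumption enters, and it is why the statement remains conditional as soon as $(\beta,\gamma)\neq(0,0)$.
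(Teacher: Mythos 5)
Your strategy matches the paper's: pass to self-similar variables (in the paper this is preceded by the change of variables $v(t,x)=g(t,|x|^{\alpha-1}x)$ that trades the two weights $|x|^{-\beta}$, $|x|^{-\gamma}$ for the single measure $d\mu_n=|x|^{n-d}dx$ and the anisotropic gradient $\D$, which is what makes the \emph{carr\'e du champ} algebra tractable), run the computation on balls $B_R$ with no-flux boundary conditions where the $\partial B_R$ term $\int_{\partial B_R}u^m\,(\omega\cdot\D|z|^2)\,d\sigma$ has a favorable sign, decompose the production term and drop the nonnegative pieces to isolate the angular integral, let $R\to\infty$, integrate in $\tau$ using convergence to the Barenblatt profile, and undo the rescalings to recover $\mathsf h(s)^{3\mu-2}$ and the weight $|x|^{\gamma-2\beta}$. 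This is exactly the paper's proof of Theorem~\ref{Thm:Monotonicity}, from which Theorem~\ref{Thm:Monotonicitylight} is read off by retaining only the $(n-2)(\alpha_{\rm FS}^2-\alpha^2)\,|\nabla_\omega\mathsf p|^2/r^4$ term.

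One small slip worth fixing: you attribute the symmetry condition to a coefficient inside $\mathcal R_{\rm rad}$, but in the paper's decomposition (via \cite[Lemmas~4.2--4.3]{DELM}) the condition $\alpha<\alpha_{\rm FS}$ (i.e.\ $\beta<\beta_{\rm FS}(\gamma)$ or $\gamma>0$) controls the sign of the coefficient of the \emph{angular} term $\mathcal R_{\rm ang}$ itself; the sign of the $(m-m_1)(\mathsf L_\alpha u^{m-1}-2n)^2$ piece inside $\mathcal R_{\rm rad}$ is governed by $m\ge m_1$, i.e.\ $p\le p_\star$ from Condition~\eqref{parameters}, not by~\eqref{Symmetry condition}. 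The remaining pieces of $\mathcal R_{\rm rad}$ (the Hessian-type squares and the positive-definite form $\mathrm Q[\mathsf p]$) are nonnegative unconditionally.
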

Here $\omega=\frac x{|x|}$, and $\nabla_\omega$ denotes the gradient with respect to angular derivatives. The explicit expression of~$\mathcal C$ and further remainder terms will be given in Theorem~\ref{Thm:Monotonicity}, in Section~\ref{Sec:DirectWeigthed}. The condition that $v$ is \emph{smooth at $x=0$} means that integrations by parts can be done without paying attention to the weight in a neighborhood of $x=0$. Condition~\eqref{entredeuxBarenblatt} may seem very restrictive, but it is probably not since, as explained in the introduction of~\cite{1602}, it is expected that for any smooth initial datum $v_0$ with finite mass, condition~\eqref{entredeuxBarenblatt} will be satisfied by any solution after some finite time~$t>0$. At least this is known from~\cite{MR2261689} when $(\beta,\gamma)=(0,0)$.

The mass normalization $\nrm{v_0}{1,\gamma}=M_\star$ simplifies the computations but the result can easily be generalized to any positive mass. The smoothness condition at $x=0$ simply means that all computations can be carried in $\R^d\setminus B_\varepsilon$ where $B_\varepsilon$ is the ball of radius $\varepsilon>0$ centered at the origin, and that the boundary terms on $\partial B_\varepsilon$ vanish as $\varepsilon\to0$. Up to the smoothness assumption, the result of Theorem~\ref{Thm:Monotonicitylight} is stronger than the result of Theorem~\ref{Thm:Rigidity}. Indeed, if $m$ and $p$ are related by $p=\frac1{2\,m-1}$ and if $w$ solves~\eqref{ELeq}, then we have that
\[
\frac d{dt}\mathsf G[v(t,\cdot)]=0
\]
at $t=0$, where $v$ is the solution of~\eqref{FD} with initial datum $v_0=w^{2p}$. This is enough to conclude that $\mathsf R[v_0]=0$ which, as shown in~\cite{DELM}, implies the result of Theorem~\ref{Thm:Rigidity}.

\subsection{Outline of the paper}
Our goals are:
\begin{enumerate}
\item To give a proof of the monotonicity of $\mathsf G$ for the solution to the evolution equation and establish the remainder term of Theorem~\ref{Thm:Monotonicity} under the smoothness assumption of the solutions of~\eqref{FD} at $x=0$.
\item To study the outer boundary terms by using self-similar variables and an approximation scheme on large balls. The novelty here is that we are able to justify the integrations by parts away from the origin for the solution to the evolution problem~\eqref{FD}.
\item To study the role of large time asymptotics and of the linearized problem, and consequently explain why the method in~\cite{DEL2015,DELM} determines the optimal range for symmetry breaking. Corresponding results are stated in Section~\ref{Sec:Lin}.
\end{enumerate}

Weights induce various technicalities, so that, in order to emphasize the strategy, we also consider the case without weights. In that case Theorem~\ref{Thm:Monotonicity} is rigorous without any smoothness assumption on the solution at $x=0$. This is not by itself new, but at least two observations are new:\\
$\bullet$ The equivalence of the R\'enyi entropy powers introduced by G.~Savar\'e and G.~Toscani in~\cite{MR3200617} and the computation based on the relative Fisher information in self-similar variables,\\
$\bullet$ The characterization of the optimality case in the \emph{Fisher information -- production of Fisher information} inequality, which explains why computations based on flows provide us with the optimal constant in the entropy -- entropy production inequality and, as a consequence, in the Gagliardo-Nirenberg inequality~\eqref{GN}, when there are no weights.

\section{Gagliardo-Nirenberg inequalities, fast diffusion and boundary terms}

To start with and clarify our strategy, let us consider the case of the Gagliardo-Nirenberg inequality~\eqref{GN}, without weights, \emph{i.e.}, $(\beta,\gamma)=(0,0)$ on the Euclidean space $\R^d$. In this section, integrations by parts will be fully justified. We recall that, when $(\beta,\gamma)=(0,0)$, $q=1/(2\,m-1)$ is in the range $1<q\le d/(d-2)$, which means that $m\in[m_1,1)$ with $m_1=1-1/d$ and $n=d$. Here we implicitly assume that $d\ge3$. The cases $d=1$ and $d=2$ can also be covered, with $q\in(1,+\infty)$, and the additional restriction that $m>0$ if $d=1$.

\subsection{A proof of Gagliardo-Nirenberg inequalities based on the R\'enyi entropy powers}\label{Sec:GN}

\subsubsection{Variation of the Fisher information along the flow}\label{Sec:VarFisher}

Let $v$ be a smooth function on $\R^d$ and define the \emph{Fisher information} as
\[
\mathsf I[v]:=\ird{v\,|\nabla\P|^2}\quad\mbox{with}\quad\P=\frac m{1-m}\,v^{m-1}\,.
\]
Here $\P$ is the \emph{pressure variable}. If $v$ solves~\eqref{poro}, in order to compute $\mathsf I':=\frac d{dt}\mathsf I[v(t,\cdot)]$, we will use the fact that
\be{Eqn:p}
\frac{\partial\P}{\partial t}=(1-m)\,\P\,\Delta\P-|\nabla\P|^2\,.
\ee
Using~\eqref{poro} and~\eqref{Eqn:p}, we can compute
\begin{multline*}
\mathsf I'=\frac d{dt}\ird{v\,|\nabla\P|^2}=\ird{\frac{\partial v}{\partial t}\,|\nabla\P|^2}+\,2\ird{v\,\nabla\P\cdot\nabla\frac{\partial\P}{\partial t}}\\
=\ird{\Delta(v^m)\,|\nabla\P|^2}+\,2\ird{v\,\nabla\P\cdot\nabla\Big((m-1)\,\P\,\Delta\P+|\nabla\P|^2\Big)}\,.
\end{multline*}
The key computation relies on integrations by parts and requires a sufficient decay of the solutions as $|x|\to+\infty$ to ensure that all integrals are finite, including the boundary integrals. In the next result, we focus on the algebra of the integrations by parts used to deal with the r.h.s.~of the above equality and time plays no role. How to apply this computation to a solution of the parabolic problem will be explained afterwards.
\begin{lemma}\label{Lem:DerivFisher} Assume that $v$ is a smooth and rapidly decaying function on $\R^d$, as well as its derivatives. If we let $\P:=\frac m{1-m}\,v^{m-1}$, then we have
\be{BLW}
\ird{\Delta(v^m)\,|\nabla\P|^2}+\,2\ird{v\,\nabla\P\cdot\nabla\Big((m-1)\,\P\,\Delta\P+|\nabla\P|^2\Big)}=-\,2\ird{v^m\,\Big(\|\mathrm D^2\P\|^2-\,(1-m)\,(\Delta\P)^2\Big)}\,.
\ee
\end{lemma}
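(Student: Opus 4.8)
The plan is to prove the identity~\eqref{BLW} by a systematic sequence of integrations by parts on the three terms appearing on the left-hand side, reducing everything to integrals of $v^m$ against pointwise-algebraic combinations of first and second derivatives of $\P$, and then checking that the coefficients collapse to exactly $-2\,(\|\mathrm D^2\P\|^2-(1-m)\,(\Delta\P)^2)$. Throughout, the rapid decay hypothesis on $v$ and its derivatives (and the fact that $v^m$, $\nabla\P$, $\mathrm D^2\P$ all decay) guarantees that every boundary term at infinity vanishes, so the integrations by parts are legitimate and time plays no role; I would remark this once at the start rather than repeating it.

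First I would rewrite the left-hand side entirely in terms of $\P$. Since $v^{m-1}=\frac{1-m}{m}\,\P$, we have $v^m = v\cdot v^{m-1}=\frac{1-m}{m}\,v\,\P$, and more usefully $v = c\,\P^{-1/(1-m)}$ for an explicit constant $c$, so $v^m = c'\,\P^{-m/(1-m)}$; it is cleaner, though, to keep $v^m$ as the base measure and express $\nabla(v^m)$ and $\Delta(v^m)$ via $\nabla\P$ and $\mathrm D^2\P$ using $v\,\nabla\P = -\,\nabla(v^m)\cdot\frac{m}{?}$-type relations. The cleanest bookkeeping device, which is the one used in the Savaré–Toscani / Dolbeault–Toscani computations, is to introduce the identity $\ird{\Delta(v^m)\,f} = -\ird{\nabla(v^m)\cdot\nabla f}$ for the first term with $f=|\nabla\P|^2$, and to integrate by parts the Laplacian terms in the second integral, moving derivatives around until the only surviving integrand is $v^m$ times a quadratic form in $\mathrm D^2\P$. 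A key algebraic fact I would use repeatedly is the pointwise Bochner-type identity $\tfrac12\,\Delta|\nabla\P|^2 = \|\mathrm D^2\P\|^2 + \nabla\P\cdot\nabla\Delta\P$, together with $\nabla\cdot(\nabla\P\,\Delta\P) = \Delta\P^2$-ish expansions, to trade the terms $\nabla\P\cdot\nabla(\P\,\Delta\P)$ and $\nabla\P\cdot\nabla|\nabla\P|^2$ for divergence terms plus the target quadratic forms.

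Concretely, the steps in order: (1) integrate by parts the first term to get $-\ird{\nabla(v^m)\cdot\nabla|\nabla\P|^2}$ and rewrite $\nabla(v^m) = m\,v^{m-1}\,\nabla v = -\,v\,\nabla\P$ up to constants (using $\nabla\P = \tfrac{m}{1-m}(m-1)v^{m-2}\nabla v = -\,m\,v^{m-2}\nabla v$, hence $v\,\nabla\P = -\,m\,v^{m-1}\nabla v = -\,\nabla(v^m)$), so the first term becomes $\ird{v\,\nabla\P\cdot\nabla|\nabla\P|^2}$, which combines with the $+2\ird{v\,\nabla\P\cdot\nabla|\nabla\P|^2}$ already present to give $3\ird{v\,\nabla\P\cdot\nabla|\nabla\P|^2}$; (2) in the remaining term $2(m-1)\ird{v\,\nabla\P\cdot\nabla(\P\,\Delta\P)}$, expand $\nabla(\P\,\Delta\P) = \Delta\P\,\nabla\P + \P\,\nabla\Delta\P$ and integrate by parts the $\P\,\nabla\Delta\P$ piece; (3) use $\nabla\P\cdot\nabla|\nabla\P|^2 = 2\,\mathrm D^2\P\,[\nabla\P,\nabla\P]$ pointwise and integrate by parts $\ird{v\,\mathrm D^2\P[\nabla\P,\nabla\P]}$ against the factor $v=$ (power of $\P$) to convert it into $\ird{v^m\,(\cdots)}$ terms; (4) collect all $\ird{v^m(\cdots)}$ contributions, at which point the coefficients — which depend on $m$ through the exponent $-m/(1-m)$ produced each time one differentiates $v$ or $v^m$ — must be shown to sum to $-2$ in front of $\|\mathrm D^2\P\|^2$ and $+2(1-m)$ in front of $(\Delta\P)^2$, with all cross terms (e.g. $\Delta\P\,\mathrm D^2\P[\nabla\P,\nabla\P]/\P$-type) cancelling.

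The main obstacle I anticipate is precisely step (4): organizing the integration-by-parts bookkeeping so that the numerous intermediate integrals — some involving $v^{m-1}|\nabla\P|^4/\P$, some involving $v^m\,\Delta\P\,\|\mathrm D^2\P\|$, some involving $v^m(\Delta\P)^2$ — cancel in the right pattern. The cleanest way to tame this is to work from the outset with $u := \P$ as the unknown and note that $v\,\nabla\P = -\nabla(v^m)$ and $v^m = \big(\tfrac{1-m}{m}\big)^{m/(1-m)} u^{-m/(1-m)}$, so that every "$v$ times derivative of $\P$" factor can be integrated by parts against a pure power of $u$, turning the whole left-hand side into $\ird{u^{-m/(1-m)}\,Q(\mathrm D^2 u,\nabla u)}$ where $Q$ is a rational expression; then one verifies the pointwise identity $Q = -2\big(\|\mathrm D^2 u\|^2 - (1-m)(\Delta u)^2\big)$ after all divergence terms are discarded. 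I would present this as the heart of the argument, doing the $v$-without-weights computation in full since it is the template for the later weighted case, and flag that the only analytic (as opposed to algebraic) input is the decay needed to kill boundary terms — which is exactly what the hypothesis on $v$ supplies.
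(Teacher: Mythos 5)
Your overall strategy --- integrate by parts using $v\,\nabla\P = -\nabla(v^m)$, invoke the Bochner-type identity $\tfrac12\Delta|\nabla\P|^2 = \|\mathrm D^2\P\|^2 + \nabla\P\cdot\nabla\Delta\P$, and reduce everything to integrals of $v^m$ against quadratic forms in $\mathrm D^2\P$ --- is exactly the paper's. But there is a concrete snag in your step (1), and it is precisely why the coefficient matching you flagged as the "main obstacle" in step (4) would fail if carried out literally. You combined the integrated-by-parts first term, $\ird{v\nabla\P\cdot\nabla|\nabla\P|^2}$, with the $+2\ird{v\nabla\P\cdot\nabla|\nabla\P|^2}$ coming from the $+|\nabla\P|^2$ in the printed left-hand side, for a net coefficient $+3$. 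However, the printed left-hand side is inconsistent with~\eqref{Eqn:p}: differentiating $\ird{v\,|\nabla\P|^2}$ along the flow and substituting $\partial_t\P=(1-m)\P\Delta\P-|\nabla\P|^2$ yields a second term $2\ird{v\nabla\P\cdot\nabla\big((1-m)\P\Delta\P-|\nabla\P|^2\big)}$, the negative of what the lemma prints --- and this is in fact the expression the paper's own proof opens with. With the correct sign the $|\nabla\P|^2$ pieces combine to coefficient $1-2=-1$, that is $-\ird{v^m\Delta|\nabla\P|^2}$, and the algebra then closes: the troublesome $\ird{v^m\,\nabla\P\cdot\nabla\Delta\P}$ contributions pick up $-2$ from Bochner, $+2m$ from $2(1-m)\ird{v\,\P\,\nabla\P\cdot\nabla\Delta\P}$ via $v\P=\tfrac{m}{1-m}v^m$, and $+2(1-m)$ from an IBP on $\ird{v\,|\nabla\P|^2\,\Delta\P}$, and $-2+2m+2(1-m)=0$. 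With your $+3$ you would instead be left holding a residual $+4\ird{v^m\,\nabla\P\cdot\nabla\Delta\P}$ that cannot be rewritten in terms of $\ird{v^m\,\|\mathrm D^2\P\|^2}$ and $\ird{v^m\,(\Delta\P)^2}$ alone, so no choice of constants would make the identity close.

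A secondary remark: your plan to replace $v$ by a pure power of $\P$ and verify a pointwise rational identity $Q(\mathrm D^2\P,\nabla\P)=-2\big(\|\mathrm D^2\P\|^2-(1-m)(\Delta\P)^2\big)$ after discarding divergences is heavier than necessary and would drag in $|\nabla\P|^4/\P^2$-type terms whose cancellation you would then have to track separately. The paper never produces such terms: it only ever uses the two relations $v\,\nabla\P=-\nabla(v^m)$ and $v\,\P=\tfrac{m}{1-m}\,v^m$, each applied once to convert an integral into the $\ird{v^m(\cdots)}$ form in a single integration by parts, so the bookkeeping stays confined to three species of integrals ($\|\mathrm D^2\P\|^2$, $(\Delta\P)^2$, and $\nabla\P\cdot\nabla\Delta\P$, the last of which cancels). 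I would recommend following that minimal route rather than expanding $v$ as a power of $\P$.
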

\begin{proof} We follow the computation of~\cite{1501} or~\cite[Appendix~B]{MR3200617}.
\begin{eqnarray*}
&&\hspace*{-18pt}\ird{\Delta(v^m)\,|\nabla\P|^2}+\,2\ird{v\,\nabla\P\cdot\nabla\Big((1-m)\,\P\,\Delta\P-|\nabla\P|^2\Big)}\\
&=&\ird{v^m\,\Delta\,|\nabla\P|^2}+\,2\,(1-m)\ird{v\,\P\,\nabla\P\cdot\nabla\Delta\P}+\,2\,(1-m)\ird{v\,|\nabla\P|^2\,\Delta\P}-\,2\ird{v\,\nabla\P\cdot\nabla\,|\nabla\P|^2}\\
&=&-\ird{v^m\,\Delta\,|\nabla\P|^2}+\,2\,(1-m)\ird{v\,\P\,\nabla\P\cdot\nabla\Delta\P}+\,2\,(1-m)\ird{v\,|\nabla\P|^2\,\Delta\P}
\end{eqnarray*}
where the last line is given by the observation that $v\,\nabla\P=-\,\nabla(v^m)$ and an integration by parts:
\[
-\ird{v\,\nabla\P\cdot\nabla\,|\nabla\P|^2}=\ird{\nabla(v^m)\cdot\nabla\,|\nabla\P|^2}=-\ird{v^m\,\Delta\,|\nabla\P|^2}\,.
\]
1) Using the elementary identity
\[
\frac12\,\Delta\,|\nabla\P|^2=\|\mathrm D^2\P\|^2+\nabla\P\cdot\nabla\Delta\P\,,
\]
we get that
\[
\ird{v^m\,\Delta\,|\nabla\P|^2}=2\ird{v^m\,\|\mathrm D^2\P\|^2}+2\ird{v^m\,\nabla\P\cdot\nabla\Delta\P}\,.
\]
2) Since $v\,\nabla\P=-\,\nabla(v^m)$, an integration by parts gives
\[
\ird{v\,|\nabla\P|^2\,\Delta\P}=-\ird{\nabla(v^m)\cdot\nabla\P\,\Delta\P}=\ird{v^m\,(\Delta\P)^2}+\ird{v^m\,\nabla\P\cdot\nabla\Delta\P}
\]
and with $v\,\P=\frac m{1-m}\,v^m$ we find that
\[
2\,(1-m)\ird{v\,\P\,\nabla\P\cdot\nabla\Delta\P}+\,2\,(1-m)\ird{v\,|\nabla\P|^2\,\Delta\P}
=2\,(1-m)\ird{v^m\,(\Delta\P)^2}+2\ird{v^m\,\nabla\P\cdot\nabla\Delta\P}\,.
\]
Collecting terms establishes~\eqref{BLW}.
\end{proof}
The result of Lemma~\ref{Lem:DerivFisher} can be applied to a solution of~\eqref{poro}.
\begin{corollary}\label{Cor:DerivFisher} If $v$ solves~\eqref{poro} with initial datum $v(x,t=0)=v_0(x)\ge0$ such that $\ird{v_0}=M_\star$, $\ird{v_0^m}<+\infty$ and $\ird{|x|^2\,v_0}<+\infty$, then $\P=\frac m{1-m}\,v^{m-1}$ solves~\eqref{Eqn:p} and
\[
\mathsf I'=-\,2\ird{v^m\,\Big(\|\mathrm D^2\P\|^2-\,(1-m)\,(\Delta\P)^2\Big)}\,.
\]
\end{corollary}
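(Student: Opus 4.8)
Here is how I would attack Corollary~\ref{Cor:DerivFisher}. It has two components: the algebraic identity~\eqref{BLW}, which is exactly Lemma~\ref{Lem:DerivFisher}, and the analytic justification that this identity may be invoked along a genuine solution of~\eqref{poro}, together with the differentiability of $t\mapsto\mathsf I[v(t,\cdot)]$. The plan is to reduce everything to a single input: Barenblatt-type decay of $v$ and of the derivatives of $\P$, uniformly on compact time intervals.

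First I would dispose of~\eqref{Eqn:p}. For $t>0$ the solution of~\eqref{poro} is a positive classical solution (infinite speed of propagation and positivity in the good fast diffusion range $m\in[m_1,1)$, since $m_1>m_c$), so $\P=\frac m{1-m}\,v^{m-1}$ is smooth and strictly positive. Using $\nabla(v^m)=-\,v\,\nabla\P$ and $\nabla v=-\frac v{(1-m)\,\P}\,\nabla\P$, the equation $v_t=\Delta(v^m)=-\nabla\cdot(v\,\nabla\P)$ becomes $v_t=\frac v{(1-m)\,\P}\,|\nabla\P|^2-v\,\Delta\P$; since $\P_t=-\,m\,v^{m-2}\,v_t$ and $m\,v^{m-1}=(1-m)\,\P$, this is precisely~\eqref{Eqn:p}.

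Next comes the crucial step. I would invoke the regularity theory of the fast diffusion equation in the range $m\in[m_1,1)$: under the stated hypotheses on $v_0$ (mass $M_\star$, $\ird{v_0^m}<\infty$, $\ird{|x|^2\,v_0}<\infty$) the solution is $C^\infty$ in $(t,x)$ and strictly positive for $t>0$ and, after an arbitrarily short time and uniformly on every $[t_1,t_2]\subset(0,\infty)$, it is squeezed between two Barenblatt profiles --- which is exactly what is known from~\cite{MR2261689} when $(\beta,\gamma)=(0,0)$, and is immediate under the stronger hypothesis~\eqref{entredeuxBarenblatt} by the comparison principle. Interior parabolic estimates for $\P$ then yield, uniformly on $[t_1,t_2]$, bounds of the type $v\le C\,(1+|x|)^{-2/(1-m)}$, $\P\le C\,(1+|x|)^2$, $|\nabla\P|\le C\,(1+|x|)$ and $\|\mathrm D^2\P\|+\|\mathrm D^3\P\|\le C$. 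Since $m>d/(d+2)$ throughout the admissible range one has $2m/(1-m)>d$, so $\ird{v^m\,\|\mathrm D^2\P\|^2}$ --- and likewise all the integrals occurring in the proof of Lemma~\ref{Lem:DerivFisher} --- are absolutely convergent, while every boundary integral on $\partial B_R$ generated by its integrations by parts (of the form $\int_{\partial B_R}v^m\,\partial_\nu(\cdots)\,d\sigma$ with $(\cdots)=O(|x|^2)$) is $O\big(R^{\,d-2m/(1-m)}\big)$ and tends to $0$. Hence~\eqref{BLW} remains valid for $v(t,\cdot)$, $t>0$, beyond the ``rapidly decaying'' class assumed in the lemma; to be completely rigorous one redoes each integration by parts against a cutoff $\chi_R(x)=\chi(x/R)$ and lets $R\to\infty$, the remainders being dominated by the bounds above. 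The very same bounds furnish, on each $[t_1,t_2]\subset(0,\infty)$, an $\mathrm L^1(\R^d)$-majorant of $\partial_t\big(v\,|\nabla\P|^2\big)$, so that $t\mapsto\mathsf I[v(t,\cdot)]$ is $C^1$ on $(0,\infty)$ with $\mathsf I'=\ird{v_t\,|\nabla\P|^2}+2\ird{v\,\nabla\P\cdot\nabla\P_t}$; substituting $v_t=\Delta(v^m)$ and~\eqref{Eqn:p} produces the left-hand side of~\eqref{BLW}, and Lemma~\ref{Lem:DerivFisher} then gives the claimed formula for $\mathsf I'$.

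The main obstacle is the decay step: extracting the Barenblatt squeezing --- in particular the uniform control of $\mathrm D^2\P$ and $\mathrm D^3\P$, together with the finer closeness to the Barenblatt profile in self-similar variables that some of the intermediate integrals actually require --- from the minimal integrability assumed on $v_0$. This rests on the nontrivial theory of the fast diffusion equation: positivity estimates of Aronson--Caffarelli / Herrero--Pierre type, local smoothing effects, and the global control of spatial tails from~\cite{MR2261689}, which turn $\mathrm L^1$ data into Barenblatt-type decay for $t>0$; under~\eqref{entredeuxBarenblatt} this collapses to a one-line comparison argument. A secondary, purely bookkeeping, difficulty is to verify term by term the absolute convergence of the intermediate quantities appearing in the proof of Lemma~\ref{Lem:DerivFisher} (some involving third derivatives of $\P$) and the vanishing of the associated boundary contributions; with the decay bounds above this is routine, but it must be checked for each term.
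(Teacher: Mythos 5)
Your proof is correct and follows essentially the same route as the paper's: perform the integrations by parts on a finite ball (you use a cutoff $\chi_R$, the paper uses $B_R$ directly), track the resulting boundary contributions, and kill them as $R\to\infty$ using Barenblatt-type decay of $v(t,\cdot)$ and its derivatives for $t>0$. The only difference is one of detail: the paper delegates the decay and vanishing of boundary terms entirely to~\cite[Theorem~2, (iii)]{BBDGV}, whereas you unpack the underlying mechanism (positivity, smoothing, Barenblatt squeezing from~\cite{MR2261689}, and the resulting decay exponents), which is a fair elaboration of the same argument rather than a different one.
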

\begin{proof} If we perform the same computations as in the proof of Lemma~\ref{Lem:DerivFisher} in a ball $B_R$ (instead of $\R^2$), we find additional boundary terms
\begin{multline*}\label{BLW--bdryterms}
\mathsf I'=\frac d{dt}\iBR{v\,|\nabla\P|^2}=-\,2\iBR{v^m\,\Big(\|\mathrm D^2\P\|^2+(m-1)\,(\Delta\P)^2\Big)}\\
+\idBR{\omega\cdot\Big(\nabla(v^m)\,|\nabla\P|^2+v^m\,\nabla\,|\nabla\P|^2+2\,(m-1)\,v^m\,\nabla\P\,\Delta\P\Big)}\,.
\end{multline*}
Here $d\sigma$ denotes the measure induced on $\partial B_R$ by Lebesgue's measure. It follows from~\cite[Theorem~2, (iii)]{BBDGV} that these boundary terms vanish as $R\to+\infty$, which completes the proof.
\end{proof}

\subsubsection{Concavity of the R\'enyi entropy powers and consequences}\label{Sec:Concavity}

Lemma~\ref{Lem:DerivFisher} establishes that $\mathsf I'\le0$ if $m\ge m_1$ with $m_1=1-1/d$. Indeed we have the identity
\[\label{BLWid}
\|\mathrm D^2\P\|^2=\tfrac1d\,(\Delta\P)^2+\left\|\,\mathrm D^2\P-\tfrac1d\,\Delta\P\,\mathrm{Id}\,\right\|^2
\]
and, as a consequence, we obtain
\[
\mathsf I'=-\,2\ird{{v^m\,\Big(\left\|\,\mathrm D^2\P-\tfrac1d\,\Delta\P\,\mathrm{Id}\,\right\|^2}}-\,2\,(m-m_1)\ird{{v^m\,(\Delta\P)^2}}\,.
\]
In the sub-critical range $m_1<m<1$, let us define the \emph{entropy} as $\mathsf E=\ird{v^m}$ and observe that, if $v$ solves~\eqref{poro},
\[
\mathsf E'=(1-m)\,\mathsf I\,.
\]
Next we introduce the \emph{R\'enyi entropy power} given by $\mathsf F=\mathsf E^\sigma$ with
\[
\sigma:=\frac 2d\,\frac1{1-m}-1\,.
\]
Using Lemma~\ref{Lem:DerivFisher}, we find that $\mathsf F''=\(\mathsf E^\sigma\)''$ can be computed as
\begin{multline*}
\frac1{\sigma\,(1-m)}\,\mathsf E^{2-\sigma}\,\mathsf F''=(1-m)\,(\sigma-1)\(\ird{v\,|\nabla\P|^2}\)^2\\
\hspace*{3cm}-\,2\(\frac1d+m-1\)\ird{v^m}\ird{v^m\,(\Delta\P)^2}-\,2\ird{v^m}\ird{v^m\,\left\|\,\mathrm D^2\P-\tfrac1d\,\Delta\P\,\mathrm{Id}\,\right\|^2}\,.
\end{multline*}
Using $v\,\nabla\P=-\,\nabla(v^m)$, we know that
\[
\ird{v\,|\nabla\P|^2}=-\ird{\nabla(v^m)\cdot\nabla\P}=\ird{v^m\,\Delta\P}\,,
\]
which implies that
\[
\ird{v^m\,\left|\Delta\P-\frac{\(\ird{v\,|\nabla\P|^2}\)^2}{\ird{v^m}}\right|^2}=\ird{v^m\,|\Delta\P|^2}-\frac{\(\ird{v\,|\nabla\P|^2}\)^2}{\ird{v^m}}\,.
\]
Hence we get that $\mathsf F''=-\,\sigma\,(1-m)\,\mathsf R$, where
\be{R}
\mathsf R[v]:=(\sigma-1)\,(1-m)\,\mathsf E[v]^{\sigma-1}\ird{v^m\,\left|\Delta\P-\frac{\ird{v\,|\nabla\P|^2}}{\ird{v^m}}\right|^2}+\,2\,\mathsf E[v]^{\sigma-1}\ird{v^m\,\left\|\,\mathrm D^2\P-\tfrac1d\,\Delta\P\,\mathrm{Id}\,\right\|^2}\,.
\ee
This proves that $\sigma\,(1-m)\,\mathsf G=\mathsf F'$ is nonincreasing (so that the function $t\mapsto\mathsf F(t)$ is concave).

\subsubsection{Large time asymptotics and consequences}\label{Sec:LargeTime}
The large time behavior of the solution of~\eqref{poro} is governed by the source-type \emph{Barenblatt solutions}
\[
v_\star(t,x):=\frac1{\kappa^d(\mu\,t)^{d/\mu}}\,\mathcal B_\star\!\(\frac x{\kappa\,(\mu\,t)^{1/\mu}}\)\quad\mbox{where}\quad\mu:=2+d\,(m-1)=d\,(m-m_c)\,,\quad\kappa:=\Big(\frac{2\,m}{1-m}\Big)^{1/\mu}\,,
\]
where $\mathcal B_\star$ is the Barenblatt profile
\[
\mathcal B_\star(x):=\big(1+|x|^2\big)^{1/(m-1)}
\]
with mass $M_\star:=\ird{\mathcal B_\star}$. We recall that $(\beta,\gamma)=(0,0)$ and, as a consequence, $n=d$ and $\mu=\mu_\star$: notations are consistent with those of~\eqref{SelfSim}. To obtain the expression of $v_\star$, it is standard to rephrase the evolution equation~\eqref{poro} in self-similar variables as follows. If we consider a solution $v$ of~\eqref{poro} and make the change of variables
\be{TDRS}
v(t,x)=\frac1{\kappa^d\,R^d}\,u\!\(\tau,\frac x{\kappa\,R}\)\quad\mbox{where}\quad\frac{dR}{dt}=R^{1-\mu}\,,\quad R(0)=R_0=\kappa^{-1}\quad\mbox{and}\quad\tau(t):=\tfrac12\,\log\(\frac{R(t)}{R_0}\)\,,
\ee
then the function $u$ solves
\be{RescaledPoro}
\frac{\partial u}{\partial\tau}+\nabla\cdot\Big[u\(\nabla u^{m-1}-\,2\,x\)\Big]=0\,,\quad(\tau,x)\in\R^+\times\R^d\,.
\ee
It is straightforward to check that $\mathcal B_\star$ is a stationary solution of~\eqref{RescaledPoro} and it is well known that $\mathcal B_\star$ attracts all nonnegative solutions with mass $M_\star$ at least if $m\in(m_c,1)$. Since
\[
R(t)=\(R_0^\mu+\mu\,t\)^{1/\mu}=(\mu\,t)^{1/\mu}\,\big(1+o(1)\big)\quad\mbox{as}\quad t\to+\infty\,,
\]
this means that
\[
v(t,x)\sim v_\star(t,x)\quad\mbox{as}\quad t\to+\infty\,.
\]
We refer to~\cite{BBDGV} for details and further references. As a consequence, we obtain that
\[
\lim_{t\to\infty}\mathsf G[v(t,\cdot)]=\lim_{t\to\infty}\mathsf G[v_\star(t,\cdot)]=\mathsf G[\mathcal B_\star]\,,
\]
because
\[
\mathsf G[v]=\(\ird{v^m}\)^{\sigma-1}\ird{v\,|\nabla\P|^2}
\]
defined in Section~\ref{Sec:Main} is scale invariant. The fact that $\mathsf G[v(t,\cdot)]$ is a nonincreasing function of $t$ means that
\[
\mathsf G[v_0]\ge\mathsf G[v(t,\cdot)]\ge\mathsf G[\mathcal B_\star]
\]
for any $t\ge0$, which is exactly equivalent to~\eqref{GN} as noted in~\cite{1501}. By keeping track of the remainder term, we get an improved inequality.
\begin{proposition} Under the assumptions of Corollary~\ref{Cor:DerivFisher}, with $\mathsf R$ defined by~\eqref{R}, for all $t\ge 0$, we have
\[
\mathsf G[v_0]=\mathsf G[v_\star]+\int_0^\infty\mathsf R[v(t,\cdot)]\,dt\,.
\]
\end{proposition}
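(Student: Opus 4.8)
The plan is to integrate the identity for $\mathsf F''$ obtained in Section~\ref{Sec:Concavity} and to match the integration constants using the large-time asymptotics recalled just above. Recall that, under the assumptions of Corollary~\ref{Cor:DerivFisher}, we have established $\mathsf F = \mathsf E^\sigma$ with $\mathsf F'' = -\,\sigma\,(1-m)\,\mathsf R$ and $\mathsf F' = \sigma\,(1-m)\,\mathsf G$, where $\mathsf R = \mathsf R[v(t,\cdot)] \ge 0$ for $m \ge m_1$. Since $\mathsf F'' \le 0$, the derivative $\mathsf F'$ is nonincreasing, hence $\mathsf G$ is a nonincreasing function of $t$; in particular $\mathsf G[v(t,\cdot)]$ has a limit as $t \to +\infty$, and by the convergence $v(t,\cdot) \sim v_\star(t,\cdot)$ together with the scale invariance of $\mathsf G$, this limit equals $\mathsf G[\mathcal B_\star] = \mathsf G[v_\star]$.

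First I would write, for any $0 \le t < T$,
\[
\mathsf G[v(t,\cdot)] - \mathsf G[v(T,\cdot)] = -\frac1{\sigma\,(1-m)}\int_t^T \mathsf F''(s)\,ds = \int_t^T \mathsf R[v(s,\cdot)]\,ds\,,
\]
which is legitimate since $\mathsf F'$ is (at least locally) absolutely continuous in $s$ along a solution of the parabolic flow. Next I would let $T \to +\infty$: the left-hand side converges to $\mathsf G[v(t,\cdot)] - \mathsf G[v_\star]$ by the asymptotic identification above, and the right-hand side converges monotonically, the integrand being nonnegative, to $\int_t^\infty \mathsf R[v(s,\cdot)]\,ds \in [0,+\infty]$. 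This already yields the finiteness of the integral (because the left-hand side is finite) and the identity
\[
\mathsf G[v(t,\cdot)] = \mathsf G[v_\star] + \int_t^\infty \mathsf R[v(s,\cdot)]\,ds \qquad \forall\,t \ge 0\,.
\]
Specializing to $t=0$ and using $\mathsf G[v_0] = \mathsf G[v(0,\cdot)]$ gives the stated formula.

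The main obstacle is the passage to the limit $T \to +\infty$: one must ensure that $\mathsf G[v(T,\cdot)] \to \mathsf G[v_\star]$, which requires enough control on the convergence $v(T,\cdot) \to v_\star(T,\cdot)$ to pass to the limit inside the (non-scale-invariant pieces of the) functional, or rather a justification that the scale-invariant quantity $\mathsf G$ is continuous under the relevant mode of convergence — this is where one invokes the known large-time asymptotics for the fast diffusion equation in the range $m \in (m_c,1)$, e.g.\ via the results of~\cite{BBDGV} and~\cite{MR2261689} that guarantee convergence of the relative entropy and Fisher information in self-similar variables, hence of $\mathsf G$ along the flow. A secondary technical point is to certify that the algebraic identity of Lemma~\ref{Lem:DerivFisher}, applied through Corollary~\ref{Cor:DerivFisher}, indeed holds for all $t > 0$ for a solution issued from $v_0$ satisfying the stated moment and integrability conditions; this is precisely what Corollary~\ref{Cor:DerivFisher} provides, the boundary terms at infinity being controlled by~\cite[Theorem~2, (iii)]{BBDGV}. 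Once these two ingredients are in place, the proof is the short monotonicity-plus-limit argument sketched above.
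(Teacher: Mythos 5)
Your proof is correct and follows essentially the same route as the paper: integrate $\mathsf F''=-\,\sigma(1-m)\,\mathsf R$ (equivalently $\frac{d}{dt}\mathsf G=-\,\mathsf R$), use the large-time convergence $v(t,\cdot)\sim v_\star(t,\cdot)$ together with the scale invariance of $\mathsf G$ to identify $\lim_{t\to\infty}\mathsf G[v(t,\cdot)]=\mathsf G[v_\star]$, and pass to the limit $T\to\infty$ using monotone convergence since $\mathsf R\ge0$. The paper leaves this argument implicit in the paragraph preceding the Proposition, but your write-up supplies exactly the intended monotonicity-plus-asymptotics reasoning, including the correct pointers to \cite{BBDGV} for the boundary-term and convergence control.
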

If we write $v_0^{m-1/2}=M_\star^{m-1/2}\,w/\nrm w{2q}$ with $q=1/(2\,m-1)$, then this inequality amounts to
\[
\tfrac{(2\,m-1)^2}{4\,m^2}\,\Big(\mathsf G[v_0]-\mathsf G[\mathcal B_\star]\Big)=\frac{\nrm{\nabla w}2^2\,\nrm w{q+1}^{2\,(1-\theta)/\theta}}{\nrm w{2q}^{2/\theta}}-\mathsf C_{\rm{GN}}^{2/\theta}\ge0\quad\mbox{with}\quad\mathsf C_{\rm{GN}}:=\(\tfrac{(2\,m-1)^2}{4\,m^2}\,\mathsf G[\mathcal B_\star]\)^{\theta/2}\,.
\]
In this way, we recover the Gagliardo-Nirenberg inequality that was established in~\cite{MR1940370}, with an additional remainder term.
\begin{corollary}\label{Cor:GN} If $1<q\le\frac d{d-2}$ and $d\ge3$, or $q>1$ and $d=1$ or $d=2$, then~\eqref{GN} holds with optimal constant~$\mathsf C_{\rm{GN}}$ as above. Moreover, equality holds in~\eqref{GN} if and only if $w^{2q}=\mathcal B_\star$, up to translations, multiplication by constants and scalings. With the above notations, one has the improved inequality
\[
\nrm{\nabla w}2^2\,\nrm w{q+1}^{2\,(1-\theta)/\theta}-\mathsf C_{\rm{GN}}^{2/\theta}\,\nrm w{2q}^{2/\theta}=\tfrac{(2\,m-1)^2}{4\,m^2}\,\nrm w{2q}^{2/\theta}\,\int_0^\infty\mathsf R[v(t,\cdot)]\,dt\quad\forall\,w\in H^p_{0,0}(\R^d)
\]
if $v$ solves~\eqref{poro} with initial datum $v_0$ such that $v_0^{m-1/2}=M_\star^{m-1/2}\,w/\nrm w{2q}$ and $q=1/(2\,m-1)$.\end{corollary}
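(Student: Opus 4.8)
The plan is to assemble what has already been proved, since the corollary is essentially a repackaging of it. The four ingredients needed are: the nonnegativity of $\mathsf R$ in the relevant range of $m$, the identity $\mathsf G[v_0]=\mathsf G[v_\star]+\int_0^\infty\mathsf R[v(t,\cdot)]\,dt$ of the Proposition stated just above, the scale invariance $\mathsf G[v_\star]=\mathsf G[\mathcal B_\star]$, and the algebraic reformulation displayed immediately before the statement. First I would fix $w\in C_0^\infty(\R^d)$ and set $v_0:=\bigl(M_\star^{\,m-1/2}\,w/\nrm w{2q}\bigr)^{2q}$, a nonnegative, compactly supported function proportional to $w^{2q}$; since $q=1/(2m-1)$ forces $(2m-1)\,q=1$, one checks directly that $\ird{v_0}=M_\star$, while boundedness and compact support of $v_0$ give $\ird{v_0^m}<\infty$ and $\ird{|x|^2\,v_0}<\infty$. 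Thus $v_0$ meets the hypotheses of Corollary~\ref{Cor:DerivFisher}, so the solution $v$ of~\eqref{poro} with datum $v_0$ has pressure $\P=\frac m{1-m}\,v^{m-1}$ solving~\eqref{Eqn:p}, the outer boundary terms vanish by~\cite{BBDGV}, and the concavity computation of Section~\ref{Sec:Concavity} applies verbatim.

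Next I would record the sign. In the stated range one has $m\ge m_1=1-1/d$, equivalently $\sigma\ge1$, so both terms of~\eqref{R} are nonnegative and $\mathsf R[v(t,\cdot)]\ge0$ for all $t\ge0$. Combining the Proposition above with $\mathsf G[v_\star]=\mathsf G[\mathcal B_\star]$ yields
\[
\mathsf G[v_0]=\mathsf G[\mathcal B_\star]+\int_0^\infty\mathsf R[v(t,\cdot)]\,dt\ \ge\ \mathsf G[\mathcal B_\star]\,.
\]
Inserting this into the algebraic identity that precedes the statement and multiplying through by $\nrm w{2q}^{2/\theta}$ produces exactly the asserted improved identity; discarding the nonnegative integral gives~\eqref{GN} with constant $\mathsf C_{\rm{GN}}$.

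For optimality and the equality cases, I would argue as follows. Taking $w$ with $w^{2q}=\mathcal B_\star$ makes $v_0=\mathcal B_\star$ and $\P=\frac m{1-m}\,(1+|x|^2)$, whose Hessian is a multiple of the identity and whose Laplacian is constant, so $\mathsf R[\mathcal B_\star]=0$; since $\mathcal B_\star$ is stationary for the self-similar flow~\eqref{RescaledPoro}, $\mathsf R[v(t,\cdot)]\equiv0$ and equality holds in~\eqref{GN}, which proves optimality of $\mathsf C_{\rm{GN}}$. Conversely, equality in~\eqref{GN} forces $\int_0^\infty\mathsf R[v(t,\cdot)]\,dt=0$, hence $\mathsf R[v(t,\cdot)]=0$ for a.e.\ $t$; by~\eqref{R} this forces $\mathrm D^2\P(t,\cdot)$ to be a multiple of the identity, and a short argument (using, according to the case, that $\sigma>1$ or the symmetry of third derivatives for $d\ge2$) upgrades this to: $\Delta\P(t,\cdot)$ is constant in $x$. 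Then $\P(t,\cdot)=a(t)\,|x|^2+b(t)\cdot x+c(t)$, and positivity and integrability of $v=(\tfrac{1-m}m\,\P)^{1/(m-1)}$ force $a(t)>0$; completing the square identifies $v_0=w^{2q}$ with $\mathcal B_\star$ up to a translation, a dilation and a multiplicative constant, as claimed.

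Finally, the passage from $C_0^\infty(\R^d)$ to general $w\in H^p_{0,0}(\R^d)$ is by density, both sides of~\eqref{GN} being continuous for the $H^p_{0,0}$-norm, and the improved identity persists whenever $v_0=w^{2q}$ remains an admissible datum (finite mass, entropy, second moment), which needs only a short separate check. I expect the only genuinely delicate point to be not in this assembly but upstream: the regularity and decay of the solution of~\eqref{poro} issued from $v_0$ that justify the integrations by parts in Lemma~\ref{Lem:DerivFisher} and the vanishing of the outer boundary terms in Corollary~\ref{Cor:DerivFisher} — which, in the unweighted case treated here, is exactly what~\cite{BBDGV} supplies.
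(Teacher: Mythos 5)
Your proposal is correct and follows essentially the same path as the paper: assemble the Proposition immediately above (the identity $\mathsf G[v_0]=\mathsf G[v_\star]+\int_0^\infty\mathsf R\,dt$), the scale invariance $\mathsf G[v_\star]=\mathsf G[\mathcal B_\star]$, the nonnegativity of $\mathsf R$ for $m\ge m_1$, and the algebraic reformulation in terms of $w$, then treat the equality case by forcing both integrands in~\eqref{R} to vanish. The paper's own proof is a one-liner ("the only point that deserves a discussion is the equality case\dots solving simultaneously $\Delta\P=\mathrm{const}$ and $\mathrm D^2\P=\tfrac1d\Delta\P\,\mathrm{Id}$"), and you are in fact slightly more careful than the paper at the borderline $\sigma=1$ (i.e.\ $q=d/(d-2)$, $m=m_1$): there the first term of~\eqref{R} carries the prefactor $(\sigma-1)(1-m)=0$, so one only gets the Hessian condition, and your remark that third-derivative symmetry (for $d\ge2$) upgrades $\mathrm D^2\P=\lambda(x)\,\mathrm{Id}$ to $\lambda$ constant is exactly the missing half-line. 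Your verification that $v_0=M_\star\,(w/\nrm w{2q})^{2q}$ has mass $M_\star$ and finite entropy/second moment is also correct (using $2q(m-\tfrac12)=1$), and the decay needed to kill the outer boundary terms is indeed what~\cite{BBDGV} supplies, as both you and the paper note.
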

\begin{proof} The only point that deserves a discussion is the equality case. Solving simultaneously
\[
\Delta\P-\frac{\ird{v\,|\nabla\P|^2}}{\ird{v^m}}=0\quad\mbox{and}\quad\mathrm D^2\P-\tfrac1d\,\Delta\P\,\mathrm{Id}=0
\]
shows that $\P(x)=\mathsf a+\mathsf b\,|x-x_0|^2$ for some real constants $\mathsf a$ and $\mathsf b$, and for some $x_0\in\R^d$.
\end{proof}

\subsection{The entropy -- entropy production method in rescaled variables}\label{Sec:RescaledVariables}

Here we follow the computations of~\cite[Section~2]{MR3103175} (also see~\cite[Proof~of~Theorem~2.4, pp.~33-36]{juengel2016entropy}) and emphasize the role of the boundary terms when the problem is restricted to a ball. The major advantage of self-similar variables is that we control the sign of these boundary terms. Such computations can be traced back to~\cite{MR1777035,MR1853037,MR1986060} and are directly inspired by the \emph{carr\'e du champ} or Bakry-Emery method introduced in~\cite{Bakry-Emery85}. The algebra is slightly more involved than the one of Section~\ref{Sec:GN} because of the presence of a drift term. The main advantage of this framework is that boundary terms have a definite sign, which is important in preparation of the computations of Section~\ref{Sec:DirectWeigthed}, in the weighted case.

For a while we will consider Eq.~\eqref{RescaledPoro} written on ball $B_R$ instead of $\R^d$, with \emph{no-flux boundary condition}. Let $u=u(\tau,x)$ be a solution of
\be{Eqn2}
\frac{\partial u}{\partial\tau}+\nabla\cdot\Big[u\(\nabla u^{m-1}-\,2\,x\)\Big]=0\quad\tau>0\,,\quad x\in B_R
\ee
where $B_R$ is a centered ball in $\R^d$ with radius $R>0$, and assume that $u$ satisfies no-flux boundary conditions
\[
\(\nabla u^{m-1}-\,2\,x\)\cdot\omega=0\quad\tau>0\,,\quad x\in\partial B_R\,.
\]
On $\partial B_R$, $\omega=x/|x|$ denotes the unit outgoing normal vector to $\partial B_R$. We define
\[
z(\tau,x):=\nabla u^{m-1}-\,2\,x
\]
so that Eq.~\eqref{Eqn2} can be rewritten with its boundary conditions as
\[
\frac{\partial u}{\partial\tau}+\nabla\cdot\(u\,z\)=0\quad\mbox{in}\quad B_R\,,\quad z\cdot\omega=0\quad\mbox{on}\quad\partial B_R\,.
\]
We recall that $m\in[m_1,1)$ where $m_1=1-1/d$. It is straightforward to check that
\[
\frac{\partial z}{\partial\tau}=(1-m)\,\nabla\big(u^{m-2}\,\nabla\cdot\(u\,z\)\big)\,.
\]
With these definitions, the time-derivative of \emph{relative Fisher information}
\[
\mathcal I_R[u]:=\iBR{u\,|z|^2}=\iBR{u\,\left|\nabla u^{m-1}-\,2\,x\right|^2}
\]
can be computed as
\begin{multline*}
\frac d{d\tau}\iBR{u\,|z|^2}=\iBR{\frac{\partial u}{\partial\tau}\,|z|^2}+2\iBR{u\,z\cdot\frac{\partial z}{\partial\tau}}\\
=\iBR{u\,z\,\cdot\nabla\,|z|^2}-\,2\iBR{u\,z\cdot\nabla\(z\cdot\nabla u^{m-1}+(m-1)\,u^{m-1}\,\nabla\cdot z\)}
\end{multline*}
using the above equations. By definition of $z$, we have
\begin{multline*}
\frac d{d\tau}\iBR{u\,|z|^2}=\iBR{u\,z\,\cdot\nabla\,|z|^2}-\,2\iBR{u\,z\cdot\nabla\(|z|^2+2\,z\cdot x+(m-1)\,u^{m-1}\,\nabla\cdot z\)}\\
=-\iBR{u\,z\,\cdot\nabla\,|z|^2}-\,2\iBR{u\,z\cdot\nabla\(2\,z\cdot x+(m-1)\,u^{m-1}\,\nabla\cdot z\)}\\
=-\iBR{\(\tfrac{m-1}m\,\nabla u^m-\,2\,x\,u\)\,\cdot\nabla\,|z|^2}-\,2\iBR{u\,z\cdot\nabla\(2\,z\cdot x+(m-1)\,u^{m-1}\,\nabla\cdot z\)}\,.
\end{multline*}
Let us denote by $d\sigma$ the measure induced by Lebesgue's measure on $\partial B_R$. Taking into account the boundary condition $z\cdot\omega=0$ on $\partial B_R$, we integrate by parts and get
\begin{eqnarray*}
&&\hspace*{-24pt}\frac d{d\tau}\iBR{u\,|z|^2}\\
&=&\iBR{\tfrac{m-1}m\,u^m\,\Delta\,| z|^2}+2\iBR{u\,x\,\cdot\nabla\,|z|^2}-\,4\iBR{u\,z\cdot\nabla(z\cdot x)}+\idBR{\tfrac{1-m}m\,u^m\(\omega\cdot\nabla\,|z|^2\)}\\
&&\hspace*{24pt}-\,2\,(m-1)\,\iBR{\(u\,z\cdot\nabla u^{m-1}\,(\nabla\cdot z)+u^m\,z\cdot\nabla\,(\nabla\cdot z)\)}\\
&=&\iBR{\tfrac{m-1}m\,u^m\,\Delta\,| z|^2}+2\iBR{u\,x\,\cdot\nabla\,|z|^2}-\,4\iBR{u\,z\cdot\nabla(z\cdot x)}+\idBR{\tfrac{1-m}m\,u^m\(\omega\cdot\nabla\,|z|^2\)}\\
&&\hspace*{24pt}-\,2\,(m-1)\,\iBR{\(-\tfrac{m-1}m\,u^m\(\nabla\cdot z\)^2+\tfrac1m\,u^m\,z\cdot\nabla(\nabla\cdot z)\)}\,.
\end{eqnarray*}
Using the elementary identity
\[
\frac12\,\Delta\,|\nabla\mathsf q|^2=\left\|\mathrm D^2\mathsf q\right\|^2+\nabla\mathsf q\cdot\nabla\Delta\mathsf q\,,
\]
with $\mathsf q:=u^{m-1}-1-|x|^2$ so that $z=\nabla\mathsf q$, we get that
\[
\iBR{u^m\,\Delta\,|z|^2}=2\iBR{u^m\,\left\|\mathrm D^2\mathsf q\right\|^2}+2\iBR{u^m\,z\cdot\nabla(\nabla\cdot z)}\,.
\]
Moreover, since $z\cdot\omega=0$ on $\partial B_R$, we know from~\cite[Lemma 5.2]{MR2533926},~\cite[Proposition~4.2]{MR3150642} or~\cite{MR775683} (also see~\cite{MR2435196} or~\cite[Lemma~A.3]{juengel2016entropy}) that
\[
\idBR{u^m\(\omega\cdot\nabla|z|^2\)}\le 0\,.
\]
Therefore, we have shown that
\begin{multline*}
\frac d{d\tau}\iBR{u\,|z|^2}\le2\,\frac{m-1}m\iBR{u^m\(\left\|\mathrm D^2\mathsf q\right\|^2+(m-1)\,(\Delta\mathsf q)^2\)}+2\iBR{u\,x\,\cdot\nabla\,|z|^2}-\,4\iBR{u\,z\cdot\nabla(z\cdot x)}\\
=-\,2\,\frac{1-m}m\iBR{u^m\(\left\|\mathrm D^2\mathsf q\right\|^2-\,(1-m)\,(\Delta\mathsf q)^2\)}-\,4\iBR{u\,|z|^2}
\end{multline*}
where, in the last step, we use the fact that $\frac{\partial z_j}{\partial x_i}=\frac{\partial z_i}{\partial x_j}$ to write that
\[
2\iBR{u\,x\,\cdot\nabla\,|z|^2}-\,4\iBR{u\,z\cdot\nabla(z\cdot x)}=-\,4\iBR{u\,|z|^2}\,.
\]
For any $m\in[m_1,1)$, this establishes that
\[
\frac d{d\tau}\iBR{u\,|z|^2}+4\iBR{u\,|z|^2}\le-\,2\,\frac{1-m}m\iBR{u^m\(\left\|\mathrm D^2\mathsf q\right\|^2-\,(1-m)\,(\Delta\mathsf q)^2\)}\le0\,.
\]
This allows us to prove a result similar to the one of Corollary~\ref{Cor:GN}. The \emph{relative entropy}
\[
\mathcal E_R[u]:=-\,\frac1m\iBR{\(u^m-\,\mathcal B_\star^m-\,m\,\mathcal B_\star^{m-1}\,(u-\,\mathcal B_\star)\)}
\]
is such that
\[
\frac d{d\tau}\mathcal E_r[u(\tau,\cdot)]=-\,\mathcal I_R[u(\tau,\cdot)]
\]
according to~\cite{MR1940370,BBDGV}. We deduce that
\[
\frac d{d\tau}\Big(\mathcal I_R[u(\tau,\cdot)]-\,4\,\mathcal E_R[u(\tau,\cdot)]\Big)\le-\,2\,\frac{1-m}m\iBR{u^m\(\left\|\mathrm D^2\mathsf q\right\|^2-\,(1-m)\,(\Delta\mathsf q)^2\)}\,.
\]
It turns out that for all $\tau\ge 0$,
\[
\mathcal I_R[u_0]-\,4\,\mathcal E_R[u_0]\ge\mathcal I_R[u(\tau,\cdot)]-\,4\,\mathcal E_R[u(\tau,\cdot)]\ge\mathcal I_R[\mathcal B_\star]-\,4\,\mathcal E_R[\mathcal B_\star]=0\,.
\]

For functions on $\R^d$, let us define the \emph{relative entropy}
\[
\mathcal E[u]:=-\,\frac1m\ird{\(u^m-\,\mathcal B_\star^m-\,m\,\mathcal B_\star^{m-1}\,(u-\,\mathcal B_\star)\)}\,,
\]
the \emph{relative Fisher information}
\[
\mathcal I[u]:=\ird{u\,|z|^2}=\ird{u\,\left|\nabla u^{m-1}-\,2\,x\right|^2}
\]
and
\[
\mathcal R[u]:=\,2\,\frac{1-m}m\ird{u^m\(\left\|\mathrm D^2\mathsf q\right\|^2-\,(1-m)\,(\Delta\mathsf q)^2\)}\,.
\]
\begin{proposition}\label{Prop:GN2} If $ 1<q\le\frac d{d-2}$ and $d\ge3$, or $q>1$ and $d=1$ or $d=2$, then~\eqref{GN} holds with optimal constant~$\mathsf C_{\rm{GN}}$ as above. Moreover, equality holds in~\eqref{GN} if and only if $w^{2q}=\mathcal B_\star$, up to translations, multiplication by constants and scalings. With the above notations, one has the improved inequality
\[
\mathcal I[u_0]-\,4\,\mathcal E[u_0]\ge\int_0^\infty\mathcal R[u(\tau,\cdot)]\,d\tau\,,
\]
if $u$ solves~\eqref{RescaledPoro} with initial datum $u_0\in\mathrm L^1_+(\R^d)$ such that $u_0^m$ and $u_0\,|x|^2$ are integrable.\end{proposition}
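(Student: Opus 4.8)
The inequality~\eqref{GN} with optimal constant $\mathsf C_{\mathrm{GN}}$, and the characterisation of its equality case, have already been obtained in Corollary~\ref{Cor:GN}; the genuinely new content here is the improved inequality in self-similar variables. The plan is to derive it from the differential inequality established just above, by the standard entropy -- entropy production scheme: integrate it along the flow~\eqref{RescaledPoro}, and identify the limiting value using the large-time behaviour of $u(\tau,\cdot)$.

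First I would repeat, directly on $\R^d$ instead of on $B_R$, the computation of $\frac{d}{d\tau}\,\mathcal I[u(\tau,\cdot)]$ carried out above. Since $m\in[m_1,1)$ and $m_1>m_c$, the decay and regularity estimates of~\cite{BBDGV} --- exactly as invoked in the proof of Corollary~\ref{Cor:DerivFisher} --- make all the integrations by parts legitimate and force every boundary term at $|x|=R$ to vanish as $R\to\infty$; in particular the only boundary contribution retained in the ball computation, $\idBR{\tfrac{1-m}m\,u^m\,(\omega\cdot\nabla\,|z|^2)}$, disappears. The ball inequality then becomes the identity
\[
\frac{d}{d\tau}\,\mathcal I[u(\tau,\cdot)]+4\,\mathcal I[u(\tau,\cdot)]=-\,\mathcal R[u(\tau,\cdot)]\le0\,,
\]
the sign coming from $\|\mathrm D^2\mathsf q\|^2-(1-m)\,(\Delta\mathsf q)^2\ge\|\mathrm D^2\mathsf q\|^2-\tfrac1d\,(\Delta\mathsf q)^2\ge0$ since $m\ge m_1=1-1/d$. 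Combined with $\frac{d}{d\tau}\mathcal E[u(\tau,\cdot)]=-\,\mathcal I[u(\tau,\cdot)]$ (see~\cite{MR1940370,BBDGV}), this gives $\frac{d}{d\tau}\big(\mathcal I[u(\tau,\cdot)]-4\,\mathcal E[u(\tau,\cdot)]\big)=-\,\mathcal R[u(\tau,\cdot)]\le0$.

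Next I would control the large-time limit. From $\mathcal I'=-\,4\,\mathcal I-\mathcal R\le-\,4\,\mathcal I$, Gr\"onwall's lemma gives $\mathcal I[u(\tau,\cdot)]\le\mathcal I[u_0]\,e^{-4\tau}\to0$ (if $\mathcal I[u_0]=+\infty$ the inequality to be proved is void; otherwise one may also restart the argument at any positive time, using the regularising effect of~\eqref{RescaledPoro}). After normalising $\ird{u_0}=M_\star$ --- harmless by the joint scaling invariance of~\eqref{GN} and~\eqref{RescaledPoro} --- the convergence $u(\tau,\cdot)\to\mathcal B_\star$ of~\cite{BBDGV} gives $\mathcal E[u(\tau,\cdot)]\to0$, hence $\mathcal I[u(\tau,\cdot)]-4\,\mathcal E[u(\tau,\cdot)]\to0$. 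Integrating the identity of the previous paragraph between $0$ and $T$,
\[
\mathcal I[u_0]-4\,\mathcal E[u_0]=\big(\mathcal I[u(T,\cdot)]-4\,\mathcal E[u(T,\cdot)]\big)+\int_0^T\mathcal R[u(\tau,\cdot)]\,d\tau\,,
\]
and letting $T\to\infty$ one obtains $\mathcal I[u_0]-4\,\mathcal E[u_0]=\int_0^\infty\mathcal R[u(\tau,\cdot)]\,d\tau\ge0$, which is the announced inequality (in fact an identity). For consistency with Corollary~\ref{Cor:GN}, note that equality in~\eqref{GN} forces $\mathcal R[u(\tau,\cdot)]\equiv0$, hence $\|\mathrm D^2\mathsf q\|^2=(1-m)\,(\Delta\mathsf q)^2$; since $m\ge m_1$ this makes $\mathrm D^2\mathsf q$ a constant multiple of the identity, so $u^{m-1}$ is a quadratic polynomial and $u$ is the Barenblatt profile $\mathcal B_\star$ up to translation and scaling, as in the proof of Corollary~\ref{Cor:GN}.

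The real obstacle is the first step: proving that the integrations by parts are legitimate on all of $\R^d$ and that the boundary terms vanish at infinity. This rests on quantitative spatial decay --- essentially the trapping of $u(\tau,\cdot)$ between two Barenblatt profiles, which holds after any positive time --- together with interior and global regularity of the rescaled solution, which is precisely what~\cite{BBDGV} supplies in the range $m\in(m_c,1)$. It is also the reason the argument is confined to the non-weighted case: for $(\beta,\gamma)\neq(0,0)$ the corresponding decay and regularity theory near the singularity $x=0$ is not available, and one must instead go through the approximation on balls $B_R$ with no-flux boundary conditions developed in Section~\ref{Sec:DirectWeigthed}. A secondary but necessary point is that $\mathcal E[u(\tau,\cdot)]\to0$, and not merely to some nonnegative limit, which again relies on the convergence to $\mathcal B_\star$ from~\cite{BBDGV}.
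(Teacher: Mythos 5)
Your proposal is correct in spirit but follows a genuinely different route from the one the paper takes for this particular statement, and the difference is worth spelling out. The paper's proof of Proposition~\ref{Prop:GN2} does \emph{not} work on all of $\R^d$: it solves~\eqref{Eqn2} on the ball $B_R$ with no-flux boundary conditions, where the single surviving boundary term $\int_{\partial B_R}u^m\,(\omega\cdot\nabla|z|^2)\,d\sigma$ has a definite \emph{sign} (it is nonpositive, by the cited concavity/convexity lemmas) and is simply dropped, yielding an \emph{inequality}; one then extends by $\mathcal B_\star$ outside $B_R$ and passes to the limit $R\to\infty$. This is deliberately the ``robust'' variant of the argument: no decay estimate is needed, only the sign of one boundary term, and this is precisely what survives in the weighted setting of Section~\ref{Sec:DirectWeigthed}, where the decay theory of~\cite{BBDGV} is unavailable. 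What you propose instead --- integrate by parts directly on $\R^d$, show all outer boundary terms vanish, and conclude an \emph{identity} --- is essentially the content of the subsequent Proposition~\ref{Prop:GN3}, which the paper establishes by undoing the time-dependent rescaling~\eqref{TDRS} and invoking Corollary~\ref{Cor:DerivFisher} in the original variables, where the decay estimates of~\cite{BBDGV} apply. (Incidentally, you can check with the algebraic identity in Remark~3 that $\mathcal R=\mathcal R_\star$, so the two propositions differ only by the sign ``$\ge$'' versus ``$=$''.) Your route buys the sharper identity, but is tied to having quantitative decay; the paper's ball route buys only an inequality, but generalizes.

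Two points in your write-up need tightening. First, the statement ``the only boundary contribution retained in the ball computation, $\int_{\partial B_R}\tfrac{1-m}m\,u^m\,(\omega\cdot\nabla|z|^2)\,d\sigma$, disappears'' is imprecise: for a solution on all of $\R^d$ restricted to $B_R$ \emph{without} the no-flux condition $z\cdot\omega=0$, the integrations by parts in Section~\ref{Sec:RescaledVariables} produce \emph{additional} boundary terms (those that were annihilated by the no-flux condition), and you must show that \emph{all} of them vanish as $R\to\infty$. The cleanest way, and what the paper does, is to transform back to the original variables $(t,x)$ via~\eqref{TDRS} and apply Corollary~\ref{Cor:DerivFisher}, rather than re-derive decay in self-similar variables; citing~\cite{BBDGV} ``exactly as invoked in the proof of Corollary~\ref{Cor:DerivFisher}'' elides this step, since that corollary is stated for~\eqref{poro}, not~\eqref{RescaledPoro}. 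Second, you implicitly assume $\mathcal I[u_0]<\infty$ in the Gr\"onwall step; you correctly note the degenerate case is vacuous, but the hypotheses of the proposition guarantee only that $\mathcal E[u_0]$ is finite, so the parenthetical remark about restarting after the smoothing time should be made part of the argument rather than an aside.
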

\begin{proof} To prove the result, one has to approximate a solution of~\eqref{RescaledPoro} by the solution of~\eqref{Eqn2} on the centered ball $B_R$ of radius $R$, and extend it to $\R^d\setminus B_R$ by $\mathcal B_\star$. By passing to the limit as $R\to+\infty$, the result follows.\end{proof}

To conclude this subsection, let us list a few comments.
\begin{enumerate}
\item The method of entropy -- entropy production method in rescaled variables is not as accurate as the method of R\'enyi entropy powers. Boundary terms have a sign and can be dropped, but at the end we get an inequality instead of an equality. On the other hand, the method is very robust and applicable not only to large balls but also to any convex domain. This is the method that we shall extend to the case of the weighted evolution equation in Section~\ref{Sec:DirectWeigthed}.
\item If we replace $u$ by $w^{2q}$, with $2q=m-\frac12$, the inequality $\mathcal I[u]-\,4\,\mathcal E[u]\ge0$ amounts to
\[
\tfrac{4\,(m-1)^2}{(2\,m-1)^2}\nrm{\nabla w}2^2+\,\frac4m\,\big(1-d\,(1-m)\big)\,\nrm w{q+1}^{q+1}\ge\tfrac{4\,(m-1)^2}{(2\,m-1)^2}\nrm{\nabla\mathcal B_\star^{m-1/2}}2^2+\,\frac4m\,\big(1-d\,(1-m)\big)\,\nrm{\mathcal B_\star^{m-1/2}}{q+1}^{q+1}\,.
\]
This is a non scale-invariant, but optimal, form of the Gagliardo-Nirenberg inequality, as shown in~\cite{MR1940370}. The inequality written with $w$ replaced by $M_\star^{m-1/2}\,w/\nrm w{2q}$ is, after optimization under scalings, equivalent to inequality~\eqref{GN}. However $\mathcal R[u]$ is not invariant under scaling. In order to replace the improved inequality of Proposition~\ref{Prop:GN2} by an improved inequality similar to the one of Corollary~\ref{Cor:GN}, one should use delicate scaling properties involving the \emph{best matching Barenblatt} instead of~$\mathcal B_\star$. See~\cite{MR3103175,1501} for further considerations in this direction.
\item An interesting remark which is important in our computations and results is that for any function $\mathsf p$,
\[
\left\|\mathrm D^2\(\mathsf p+|x|^2\)\right\|^2-\,\tfrac1d\,\Big(\Delta\(\mathsf p+|x|^2\)\Big)^2=\left\|\mathrm D^2\mathsf p\right\|^2-\,\tfrac1d\,(\Delta\mathsf p)^2=\left\|\mathrm D^2\mathsf p-\,\tfrac1d\,\Delta\mathsf p\,\mathrm{Id}\right\|^2\,.
\]
As a consequence, the remainder terms in the entropy -- entropy production method in rescaled variables are very similar to the remainder terms in the R\'enyi entropy powers method, and the $|x|^2$ term plays essentially no role.
\end{enumerate}

\subsection{The two methods are identical}\label{Sec:Renyi=BE}

The computations of Sections~\ref{Sec:GN} and~\ref{Sec:RescaledVariables} look similar and are actually the same, if we do not take into consideration the boundary terms. Let us give some details.

\subsubsection{A computation based on the time-dependent rescaling}
If $v$ is a solution of~\eqref{poro}, then the function $u$ defined by the time-dependent rescaling~\eqref{TDRS} solves~\eqref{RescaledPoro}. With the choice $R_0=1/\kappa$, the initial data are identical
\[
u(\tau=0,\cdot)=u_0=v_0=v(t=0,\cdot)\,.
\]
As in Section~\ref{Sec:RescaledVariables}, let us define $z(x,\tau):=\nabla u^{m-1}-\,2\,x$ and consider the relative Fisher information
\[
\mathcal I[u]:=\ird{u\,|z|^2}=\ird{u\,\left|\nabla u^{m-1}-\,2\,x\right|^2}=\ird{u\,\left|\nabla u^{m-1}\right|^2}+\,4\ird{u\,|x|^2}-\,4\tfrac{1-m}m\,d\ird{u^m}\,.
\]

\noindent$\bullet$ If $m=m_1=1-\frac1d$, then $\frac{1-m}m\,d=\frac1m$ and, by undoing the time-dependent rescaling~\eqref{TDRS}, we obtain that
\[
\ird{u\,\left|\nabla u^{m-1}\right|^2}=\(\tfrac{1-m}m\)^2\ird{v\,|\nabla\P|^2}
\]
with $\P=\frac m{1-m}\,v^{m-1}$, because $\mu=1$ and so, since $\frac{dt}{d\tau}=\frac{1-m}m\,e^{2\,\tau}$, we get that
\[
\frac d{d\tau}\ird{u\,\left|\nabla u^{m-1}\right|^2}=\tfrac{1-m}m\,e^{2\,\tau}\,\frac d{dt}\ird{v\,|\nabla\P|^2}
\]
is nonpositive by Corollary~\ref{Cor:DerivFisher}. By the computations of Section~\ref{Sec:GN}, we obtain that
\begin{multline*}
\frac d{d\tau}\mathcal I[u(\tau,\cdot)]=\tfrac{1-m}m\,e^{2\,\tau}\,\frac d{dt}\ird{v\,|\nabla\P|^2}+4\,\frac d{d\tau}\ird{\(u\,|x|^2-\frac1m\,u^m\)}\\
\le4\,\frac d{d\tau}\ird{\(u\,|x|^2-\frac1m\,u^m\)}=-\,4\,\mathcal I[u(\tau,\cdot)]\,.
\end{multline*}

\noindent$\bullet$ If $m\in[m_1,1)$, we observe that
\[
\ird{u\,\left|\nabla u^{m-1}\right|^2}=\(\tfrac{1-m}m\)^2\,e^{4\,(\mu-1)\,\tau}\ird{v\,|\nabla\P|^2}
\]
and from $R(t)=R_0\,e^{2\,\tau}$, we deduce that $\frac{dt}{d\tau}=2\,R^\mu=\frac{1-m}m\,e^{2\,\mu\,\tau}$. A computation similar to the case $m=m_1$ gives
\begin{multline*}
\frac d{d\tau}\Big(\mathcal I[u(\tau,\cdot)]-\,4\,\mathcal E[u(\tau,\cdot)]\Big)\\
=\(\tfrac {1-m}m\)^2\,e^{4\,(\mu-1)\,\tau}\(\tfrac{1-m}m\,e^{2\,\mu\,\tau}\,\frac d{dt}\ird{v\,|\nabla\P|^2}+\,4\,(\mu-1)\ird{v\,|\nabla\P|^2}\)+\,\tfrac{4\,d}m\,(m-m_1)\frac d{d\tau}\ird{u^m}
\end{multline*}
with $\P=\frac m{1-m}\,v^{m-1}$. Using the fact that
\[
\frac d{d\tau}\ird{u^m}=m\ird{u\,\left|\nabla u^{m-1}\right|^2}-\,2\,d\,(1-m)\ird{u^m}
\]
on the one hand, and Corollary~\ref{Cor:DerivFisher} on the other hand, we end up with
\[\label{ineqqq}
\frac d{d\tau}\Big(\mathcal I[u(\tau,\cdot)]-\,4\,\mathcal E[u(\tau,\cdot)]\Big)=-\,\mathcal R_\star[u(\tau,\cdot)]\,,
\]
where
\begin{multline*}
\mathcal R_\star[u]:=2\,e^{4\,(\mu-1)\,\tau}\(\tfrac{1-m}m\,e^{2\,\mu\,\tau}\ird{v^m\,\Big(\left\|\mathrm D^2v^{m-1}\right\|^2-\,(1-m)\(\Delta v^{m-1}\)^2\Big)}-\,2\,(\mu-1)\ird{v\,\left|\nabla v^{m-1}\right|^2}\)\\
-\,\tfrac{4\,d}m\,(m-m_1)\(m\ird{u\,\left|\nabla u^{m-1}\right|^2}-\,2\,d\,(1-m)\ird{u^m}\)\,.
\end{multline*}
Notice that $\mathcal R_\star[u]$ does not depend on $\tau$ explicitly because, according to the time-dependent rescaling~\eqref{TDRS},
\begin{multline*}
\mathcal R_\star[u]=2\,\tfrac{1-m}m\ird{u^m\,\left\|\mathrm D^2u^{m-1}-\,\tfrac 1d\,\Delta u^{m-1}\,\mathrm{Id}\right\|^2}+\,2\,(m-m_1)\,\tfrac{1-m}m\ird{u^m\(\Delta u^{m-1}\)^2}\\
-\,8\,d\,(m-m_1)\ird{u\,\left|\nabla u^{m-1}\right|^2}+\,\tfrac{8\,d^2}m\,(m-m_1)\,(1-m)\ird{u^m}\,.
\end{multline*}
This can be rewritten as
\[
\mathcal R_\star[u]=2\,\tfrac{1-m}m\ird{u^m\,\left\|\mathrm D^2u^{m-1}-\,\tfrac 1d\,\Delta u^{m-1}\,\mathrm{Id}\right\|^2}+\,2\,(m-m_1)\,\tfrac{1-m}m\ird{u^m\,\left|\Delta u^{m-1}-\,2\,d\right|^2}\,.
\]
With these considerations, we obtain an improvement of Proposition~\ref{Prop:GN2}, which goes as follows.
\begin{proposition}\label{Prop:GN3} If $ 1<q\le\frac d{d-2}$ and $d\ge3$, or $q>1$ and $d=1$ or $d=2$, then~\eqref{GN} holds with optimal constant~$\mathsf C_{\rm{GN}}$ as above. Moreover, equality holds in~\eqref{GN} if and only if $w^{2q}=\mathcal B_\star$, up to translations, multiplication by constants and scalings. With the above notations, one has the improved inequality
\[
\mathcal I[u_0]-\,4\,\mathcal E[u_0]=\int_0^\infty\mathcal R_\star[u(\tau,\cdot)]\,d\tau\,,
\]
if $u$ solves~\eqref{RescaledPoro} with initial datum $u_0\in\mathrm L^1_+(\R^d)$ such that $u_0^m$ and $u_0\,|x|^2$ are integrable.\end{proposition}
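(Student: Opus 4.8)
The plan is to combine the three computations carried out in Sections~\ref{Sec:GN}--\ref{Sec:Renyi=BE}. The inequality itself, $\mathcal I[u_0]-4\,\mathcal E[u_0]\ge0$ with the sharp constant $\mathsf C_{\rm GN}$ and the characterization of the equality case, is already proved in Proposition~\ref{Prop:GN2}; so the only genuinely new content is the identity
\[
\mathcal I[u_0]-4\,\mathcal E[u_0]=\int_0^\infty\mathcal R_\star[u(\tau,\cdot)]\,d\tau\,.
\]
First I would observe that the identity $\frac{d}{d\tau}\big(\mathcal I[u(\tau,\cdot)]-4\,\mathcal E[u(\tau,\cdot)]\big)=-\mathcal R_\star[u(\tau,\cdot)]$ has already been established in the displayed computation preceding~\eqref{ineqqq}: one rewrites the relative Fisher information $\mathcal I[u]=\ird{u\,|\nabla u^{m-1}|^2}+4\ird{u\,|x|^2}-\tfrac{4(1-m)}{m}d\ird{u^m}$, undoes the time-dependent rescaling~\eqref{TDRS} to pull the $\nabla u^{m-1}$ piece back to the variable $v$ solving~\eqref{poro}, applies Corollary~\ref{Cor:DerivFisher} to differentiate $\ird{v\,|\nabla\P|^2}$, and uses the evolution of $\ird{u^m}$ under~\eqref{RescaledPoro}. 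The two expressions for $\mathcal R_\star$ are equal after the algebraic simplification recorded in the text (completing the square in $\Delta u^{m-1}$ and using the remark that the $|x|^2$-term drops out of the traceless Hessian). So the differential identity is in hand.

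The remaining work is the integration of the differential identity from $0$ to $\infty$, i.e. justifying
\[
\mathcal I[u_0]-4\,\mathcal E[u_0]-\big(\mathcal I[u(T,\cdot)]-4\,\mathcal E[u(T,\cdot)]\big)=\int_0^T\mathcal R_\star[u(\tau,\cdot)]\,d\tau
\]
and then passing to the limit $T\to\infty$. For the left-hand side I would invoke the convergence of $u(\tau,\cdot)$ to the Barenblatt profile $\mathcal B_\star$ in relative entropy and relative Fisher information as $\tau\to\infty$ (as in Section~\ref{Sec:LargeTime} and~\cite{BBDGV,MR1940370}), together with $\mathcal I[\mathcal B_\star]-4\,\mathcal E[\mathcal B_\star]=0$, so that $\mathcal I[u(T,\cdot)]-4\,\mathcal E[u(T,\cdot)]\to0$; monotonicity (the quantity is nonincreasing and bounded below by $0$) makes the limit automatic. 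The right-hand side then converges by monotone convergence, since $\mathcal R_\star[u]\ge0$ — this nonnegativity is exactly the content of the sum-of-squares expression for $\mathcal R_\star$, valid because $m\ge m_1$. The approximation argument of Proposition~\ref{Prop:GN2} — solve~\eqref{Eqn2} on $B_R$ with no-flux boundary conditions, extend by $\mathcal B_\star$ outside, and let $R\to\infty$ — transfers the computation, which is rigorous on balls, to $\R^d$; here the boundary terms were already shown to have the right sign, and in the limit they vanish.

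The main obstacle is the same one flagged repeatedly in the paper: making the integrations by parts in the differential identity rigorous for an honest solution of~\eqref{RescaledPoro} on all of $\R^d$, rather than merely formal. Concretely, one must ensure enough decay of $u$, $\nabla u^{m-1}$ and $\mathrm D^2 u^{m-1}$ as $|x|\to\infty$ for all the integrals (including $\ird{u^m\,\|\mathrm D^2u^{m-1}\|^2}$ and the boundary integrals on $\partial B_R$) to be finite and for the boundary terms on $\partial B_R$ to vanish as $R\to\infty$; this is where the two-sided Barenblatt bound on the data and the regularity theory of~\cite{BBDGV,MR2261689} enter. Granting the approximation scheme and these decay estimates, the rest — the algebra yielding $-\mathcal R_\star$ and the simplification to the sum-of-squares form — is the routine computation already displayed above, and the final statement follows.
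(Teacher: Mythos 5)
Your proposal is essentially the paper's argument, reconstructed correctly: establish the exact differential identity $\tfrac{d}{d\tau}\big(\mathcal I[u(\tau,\cdot)]-4\,\mathcal E[u(\tau,\cdot)]\big)=-\,\mathcal R_\star[u(\tau,\cdot)]$ by undoing the time-dependent rescaling~\eqref{TDRS}, invoking Corollary~\ref{Cor:DerivFisher}, and simplifying $\mathcal R_\star$ to the sum-of-squares form; then integrate in $\tau$ and pass to the limit using convergence to the Barenblatt profile (so that $\mathcal I-4\mathcal E\to0$) together with monotone convergence for the nonnegative integrand. The paper does not isolate a formal \emph{proof} environment for this proposition; it is stated as a consequence of the calculation immediately preceding it, and you have filled in the integration step correctly.

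One small point of caution in your last paragraph: the ball-approximation scheme as used for Proposition~\ref{Prop:GN2} exploits only the \emph{sign} of the boundary term $\idBR{u^m\,(\omega\cdot\nabla|z|^2)}\le0$, which yields an inequality, not an equality. The equality asserted here rests on the stronger fact — the one used in the proof of Corollary~\ref{Cor:DerivFisher} via~\cite[Theorem~2, (iii)]{BBDGV} — that the boundary terms actually \emph{vanish} as $R\to\infty$. You do correctly route the argument through Corollary~\ref{Cor:DerivFisher} in your second paragraph, which carries exactly this vanishing, so your conclusion is sound; just take care not to suggest that the sign of the boundary contribution alone upgrades the inequality of Proposition~\ref{Prop:GN2} to the equality of Proposition~\ref{Prop:GN3}.
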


\subsubsection{A direct computation in rescaled variables} Although this is equivalent to the computations of the previous subsection, it is instructive to redo the computation in the rescaled variables. Let us define $\mathsf p:=u^{m-1}$ and observe that it solves
\[
\frac{\partial\mathsf p}{\partial\tau}=(m-1)\,\mathsf p\,\Delta\mathsf p-|\nabla\mathsf p|^2+\,2\,x\cdot\nabla\mathsf p+2\,d\,(m-1)\,\mathsf p\,.
\]
For simplicity, we consider only the case $m=m_1$ and observe that
\begin{multline*}
\frac d{d\tau}\ird{u\,|\nabla\mathsf p|^2}=\frac{1-m}m\ird{\Delta(u^m)\,|\nabla\mathsf p|^2}+\,2\ird{u\,\nabla\mathsf p\cdot\nabla\Big((m-1)\,\mathsf p\,\Delta\mathsf p+|\nabla\mathsf p|^2\Big)}\\
+\,2\ird{\nabla\cdot(x\,u)\,|\nabla\mathsf p|^2}+\,4\ird{u\,\nabla\mathsf p\cdot\nabla\Big(x\cdot\nabla\mathsf p+d\,(m-1)\,\mathsf p\Big)}\\
=\frac{1-m}m\ird{\Delta(u^m)\,|\nabla\mathsf p|^2}+\,2\ird{u\,\nabla\mathsf p\cdot\nabla\Big((m-1)\,\mathsf p\,\Delta\mathsf p+|\nabla\mathsf p|^2\Big)}-\,4\ird{u\,|\nabla\mathsf p|^2}
\end{multline*}
because $d\,(m-1)=-1$ and $2\,\nabla\cdot(x\,u)\,|\nabla\mathsf p|^2+4\,u\,\nabla\mathsf p\cdot\nabla\(x\cdot\nabla\mathsf p\)=2\,\nabla\cdot\(x\,u\,|\nabla\mathsf p|^2\)$. If we write that $\P:=\frac m{1-m}\,\mathsf p$, then the r.h.s.~can be rewritten as
\begin{multline*}
\frac{1-m}m\ird{\Delta(u^m)\,|\nabla\mathsf p|^2}+\,2\ird{u\,\nabla\mathsf p\cdot\nabla\Big((m-1)\,\mathsf p\,\Delta\mathsf p+|\nabla\mathsf p|^2\Big)}\\
=\(\frac{1-m}m\)^3\left[\ird{\Delta(u^m)\,|\nabla\P|^2}+\,2\ird{u\,\nabla\P\cdot\nabla\Big((m-1)\,\P\,\Delta\P+|\nabla\P|^2\Big)}\right]
\end{multline*}
and we are back to the computations of Section~\ref{Sec:GN}. Using~\eqref{BLW} with $v=u$ and $\P=\frac m{1-m}\,v^{m-1}$, we obtain that
\[
\frac d{d\tau}\mathcal I[u(\tau,\cdot)]+\,4\,\mathcal I[u(\tau,\cdot)]=\frac d{d\tau}\ird{u\,|\nabla\mathsf p|^2}+\,4\ird{u\,|\nabla\mathsf p|^2}=-\,2\,\frac{1-m}m\ird{u^m\,\Big(\left\|\,\mathrm D^2\mathsf p-\tfrac1d\,\Delta\mathsf p\,\mathrm{Id}\,\right\|^2}
\]
if $u$ solves~\eqref{RescaledPoro}.

This concludes the section on Gagliardo-Nirenberg inequalities~\eqref{GN} and fast diffusion equations~\eqref{poro}. So far proofs are rigorous. From now on, we shall work with weights, that is, on Caffarelli-Kohn-Nirenberg inequalities~\eqref{CKN} and weighted parabolic equations~\eqref{FD}, and assume that integrations by parts can be carried out at $x=0$ without any precaution. Corresponding results will henceforth be formal.

\section{The case of the weighted diffusion equation}\label{Sec:DirectWeigthed}

In order to study the weighted evolution equation~\eqref{FD}, it is convenient to introduce as in~\cite{DEL2015,DELM} a change of variables which amounts to rephrase our problem in a space of higher, \emph{artificial dimension} $n\ge d$ (here $n$ is a dimension at least from the point of view of the scaling properties), or to be precise to consider a weight $|x|^{n-d}$ which is the same in all norms. With
\[\label{ctsrel}
\alpha=1+\frac{\beta-\gamma}2\quad\mbox{and}\quad n=2\,\frac{d-\gamma}{\beta+2-\gamma}\,,
\]
we claim that Inequality~\eqref{CKN} can be rewritten for a function $W$ such that
\[\label{notttt}
w(x)=W\(|x|^{\alpha-1}\,x\)\quad\forall\,x\in\R^d
\]
as
\[
\nrm W{2p,\delta}\le\mathsf K_{\alpha,n,p}\,\nrm{\DD W}{2,\delta}^\vartheta\,\nrm W{p+1,\delta}^{1-\vartheta}\,,\quad\forall\,W\in\mathrm H^p_{\delta,\delta}(\R^d)\,,
\]
with the notations
\[
\delta=d-n\,,\quad r=|x|\,,\quad\omega=\frac xr\,,\quad\DD W=\(\alpha\,\partial_r\!W,\,r^{-1}\,\nabla_{\kern-2pt\omega}W\)\,,
\]
where $\partial_r=\partial/\partial_r$ and $\nabla_\omega$ is the gradient in the angular derivatives, $\omega\in\sphere$.
The optimal constant $\mathsf K_{\alpha,n,p}$ is explicitly computed in terms of $\C_{\beta,\gamma,p}$ and the condition~\eqref{parameters} is equivalent to
\[
d\ge2\,,\quad\alpha>0\,,\quad n>d\quad\mbox{and}\quad p\in\(1,p_\star\right]\quad\mbox{with}\quad p_\star=\frac n{n-2}\,.
\]
By our change of variables, $w_\star$ is changed into
\[
W_\star(x):=\(1+|x|^2\)^{-1/(p-1)}\quad\forall\,x\in\R^d\,.
\]
The symmetry condition~\eqref{Symmetry condition} now reads
\[
\alpha\le\alpha_{\rm FS}\quad\mbox{with}\quad\alpha_{\rm FS}:=\sqrt{\frac{d-1}{n-1}}\,.
\]

For any $\alpha\ge 1$, note that the operator $\D$ can be rewritten as
\[
\D=\nabla+(\alpha-1)\,\frac x{|x|^2}\,(x\cdot\nabla)=\nabla+(\alpha-1)\,\omega\,\partial_r
\,.\]
If $\Dstar$ is the adjoint operator of $\D$, with respect to the measure $d\mu_n:=r^{n-1}\,dr\,d\omega$, then
\[
\Dstar Z=-\,|x|^\delta\,\nabla\cdot (|x|^{-\delta}\,Z)-\,(\alpha-1)\,r^{1-n}\,\omega\cdot\partial_r(r^{n-1}\,Z)
\]
for any vector-valued function $Z$ and moreover we have the useful formula
\[
\Dstar(W\,Z)=-\,\D W\cdot Z+W\,\Dstar Z
\]
if $W$ and $Z$ are respectively scalar- and vector-valued functions. Let us define the operator $\mathsf L_\alpha$ by
\[
\mathsf L_\alpha=-\,\Dstar\D=\alpha^2\(\partial^2_r+\frac{n-1}r\,\partial_r\)+\frac{\Delta_\omega}{r^2}\,,
\]
where $\Delta_\omega$ denotes the Laplace-Beltrami operator on $\sphere$.

We introduce the weighted equation
\[
\frac{\partial g}{\partial t}=\mathsf L_\alpha g^m\,,
\]
which is obtained from~\eqref{FD} by the change of variables
\[
v(t,x)=g\(t,|x|^{\alpha-1}\,x\)\quad\forall\,(t,x)\in\R^+\times\R^d\,.
\]
Next we use a self-similar change of variables similar to~\eqref{TDRS}, but with a scaling which corresponds to the artificial \emph{dimension} $n$. With $\mu=2+n\,(m-1)$ and $\kappa=\big(\frac{2\,m}{1-m}\big)^{1/\mu}$, let
\be{TDRS-weight}
g(t,x)=\frac1{\kappa^n\,R^n}\,u\!\(\tau,\frac x{\kappa\,R}\)\quad\mbox{where}\quad\left\{\begin{array}{l}
\frac{dR}{dt}=R^{1-\mu}\,,\quad R(0)=R_0=\kappa^{-1}\,,\\[6pt]
\tau(t)=\tfrac12\,\log\(\frac{R(t)}{R_0}\)\,.
\end{array}\right.
\ee
We observe that $\mu_\star=\alpha\,\mu$ with the notations of~\eqref{SelfSim} in Section~\ref{Sec:Main}.

In self-similar variables the function $u$ solves
\be{Eqn}
\frac{\partial u}{\partial\tau}=\Dstar(u\,z)
\ee
where
\[
z(\tau,x):=\D u^{m-1}-\,\frac2\alpha\,x=\D\(u^{m-1}-\frac{|x|^2}{\alpha^2}\)=\D\mathsf q\,,\quad\mathsf q:=u^{m-1}-\,\Balpha^{m-1}\quad\mbox{and}\quad\Balpha(x):=\(1+\frac{|x|^2}{\alpha^2}\)^\frac1{m-1}\,.
\]
The exponent $m$ is now in the range $m_1\le m<1$ with $m_1=1-1/n$. As in the case without weights, \emph{i.e.} the case $n=d$, we also consider the problem restricted to a ball $B_R$ and assume no-flux boundary conditions, that is,
\[
z\cdot\omega=0\quad\mbox{on}\quad\partial B_R\,.
\]
It is straightforward to check that
\[
\frac{\partial z}{\partial\tau}+\,(1-m)\,\D\,\Big(u^{m-2}\,\Dstar\(u\,z\)\Big)=0
\]
and, as a consequence,
\begin{multline*}
\frac d{d\tau}\iwR{u\,|z|^2}=\iwR{\frac{\partial u}{\partial\tau}\,|z|^2}+\,2\iwR{u\,z\cdot\frac{\partial z}{\partial\tau}}\\
=\iwR{\Dstar(u\,z)\,|z|^2}-\,2\,(1-m)\iwR{u\,z\cdot\D\,\Big(u^{m-2}\,\Dstar\(u\,z\)\Big)}\,.
\end{multline*}
Taking into account the boundary condition $z\cdot\omega=0$ on $\partial B_R$, a first integration by parts shows that
\[
\iwR{\Dstar(u\,z)\,|z|^2}=\iwR{u\,z\,\cdot\D\,|z|^2}\,.
\]
Hence we get
\begin{eqnarray*}
\frac d{d\tau}\iwR{u\,|z|^2}&\hspace*{-5pt}=&\hspace*{-5pt}\iwR{u\,z\,\cdot\D\,|z|^2}-\,2\iwR{u\,z\cdot\D\(z\cdot\D u^{m-1}+(1-m)\,u^{m-1}\,\Dstar z\)}\\
&\hspace*{-5pt}=&\hspace*{-5pt}\iwR{u\,z\,\cdot\D\,|z|^2}-\,2\iwR{u\,z\cdot\D\(|z|^2+\frac2{\alpha}\,z\cdot x+(1-m)\,u^{m-1}\,\Dstar z\)}\\
&\hspace*{-5pt}=&\hspace*{-5pt}-\iwR{u\,z\,\cdot\D\,|z|^2}-\,\frac4\alpha\iwR{u\,z\cdot\D(z\cdot x)}-\,2\,(1-m)\iwR{u\,z\cdot\D\(u^{m-1}\,\Dstar z\)}\,.
\end{eqnarray*}
Now, by expanding and integrating by parts we see that
\begin{multline*}
\iwR{u\,z\cdot\D\(u^{m-1}\,\Dstar z\)}=\iwR{\(u^m\,z\cdot\D\(\Dstar z\)+\frac{m-1}m\,\D u^m\cdot z\,\(\Dstar z\)\)}\\
=\frac1m\iwR{u^m\,z\cdot\D\(\Dstar z\)}-\frac{1-m}m\iwR{u^m\(\Dstar z\)^2}\,.
\end{multline*}
Integrating again by parts, we obtain
\[
\iwR{\D u^m\cdot\D|z|^2}=-\iwR{u^m\,\mathsf L_\alpha|z|^2}+\,\alpha\,R^{n-d}\int_{\partial B_R}u^m\,\omega\cdot\D|z|^2\,d\sigma\,.
\] 
So, after observing that $u\,z=\frac{m-1}m\,\D u^m-\frac2\alpha\,u\,x$, we finally get
\begin{multline*}\label{vraief}
\hspace*{-4pt}\frac d{d\tau}\iwR{u\,|z|^2}=\frac{m-1}m\iwR{u^m\(\mathsf L_\alpha|z|^2-\,2\,z\cdot\D\(\Dstar z\)+2\,(m-1)\(\Dstar z\)^2\)}\\
+\frac2\alpha\iwR{u\,x\,\cdot\D\,|z|^2}-\,\frac4\alpha\iwR{u\,z\cdot\D(z\cdot x)}\\
+\frac{1-m}m\,\alpha\,R^{n-d}\int_{\partial B_R}u^m\(\omega\cdot\D|z|^2\)\,d\sigma\,.
\end{multline*}
Next, since $z\cdot\omega=0$ on $\partial B_R$, exactly for the same reasons as in Section~\ref{Sec:RescaledVariables}, we know that
\[
\int_{\partial B_R}u^m\(\omega\cdot\D|z|^2\)\,d\sigma\le0\,.
\]

Let us define $\mathsf p:=u^{m-1}$ and
\[
\mathsf K[\mathsf p]:=\frac12\,\mathsf L_\alpha\,|\DD\mathsf p|^2-\,\DD\mathsf p\cdot\DD\mathsf L_\alpha\mathsf p-(1-m)\,(\mathsf L_\alpha\mathsf p)^2\,.
\]
By~\cite[Lemma~4.2]{DELM}, we know that
\[
\mathsf K[\mathsf p]=\alpha^4\(1-\tfrac1n\)\left[\mathsf p''-\frac{\mathsf p'}r-\frac{\Delta_\omega\,\mathsf p}{\alpha^2\,(n-1)\,r^2}\right]^2+\frac{2\,\alpha^2}{r^2}\left|\nabla_\omega\mathsf p'-\frac{\nabla_\omega\mathsf p}r \right|^2+\frac{\mathsf k[\mathsf p]}{r^4}+\(\tfrac1n-(1-m)\)\(\mathsf L_\alpha\mathsf q\)^2
\]
where
\[
\mathsf k[\mathsf p]:=\tfrac12\,\Delta_\omega\,|\nabla_\omega\mathsf p|^2-\nabla_\omega\mathsf p\cdot\nabla_\omega\Delta_\omega\,\mathsf p-\tfrac1{n-1}\,(\Delta_\omega\,\mathsf p)^2-(n-2)\,\alpha^2\,|\nabla_\omega\mathsf p|^2\,.
\]
As a consequence, we can write that
\[
\mathsf K[\mathsf p]=\mathsf K\left[\mathsf p-\Balpha^{m-1}\right]=\tfrac12\(\mathsf L_\alpha|z|^2-\,2\,z\cdot\D\(\Dstar z\)+2\,(m-1)\(\Dstar z\)^2\)\,.
\]
We also know that
\[
\isph{\mathsf k[\mathsf p]\,u^m}=\isph{\mathrm Q[\mathsf p]\,u^m}+(n-2)\(\alpha_{\rm FS}^2-\alpha^2\)\isph{|\nabla_\omega\mathsf p|^2\,u^m}\,,
\]
where, according to~\cite[Lemma~4.3]{DELM} (see details in the proof), for some explicit constants $a$ and $b$ which depend only on $\alpha$, $n$ and $d$, $\mathrm Q[\mathsf p]$ is such that
\[
\mathrm Q[\mathsf p]=\alpha_{\rm FS}^2\,\frac{n-2}{d-2}\,{\left\|(\nabla_\omega\otimes\nabla_\omega)\,\mathsf p-\frac1{d-1}\,(\Delta_\omega\mathsf p)+a\(\frac{\nabla_\omega\mathsf p\otimes\nabla_\omega\mathsf p}{\mathsf p}-\frac1{d-1}\,\frac{|\nabla_\omega\mathsf p|^2}{\mathsf p}\,g\)\right\|^2}+b^2\,\frac{|\nabla_\omega\mathsf p|^4}{|\mathsf p|^2}\quad\mbox{if}\quad d\ge3\,.
\]
Here $g$ denotes the standard metric on $\sphere$. The case $d=2$ has to be treated separately. According to~\cite[Lemma~4.3]{DELM}, there exists also an explicit constant, that we still denote by $b$, such that
\[
\isph{\mathrm Q[\mathsf p]\,u^m}\ge b^2\isph{\frac{|\nabla_\omega\mathsf p|^4}{\mathsf p^2}}\quad\mbox{if}\quad d=2\,.
\]
Collecting these observations, we have shown that
\begin{multline*}
\hspace*{-4pt}\frac d{d\tau}\iwR{u\,|z|^2}\\
\le-\,2\,\frac{1-m}m\iwR{\(\alpha^4\(1-\tfrac1n\)\left[\mathsf p''-\frac{\mathsf p'}r-\frac{\Delta_\omega\,\mathsf p}{\alpha^2\,(n-1)\,r^2}\right]^2+\frac{2\,\alpha^2}{r^2}\left|\nabla_\omega\mathsf p'-\frac{\nabla_\omega\mathsf p}r \right|^2+\(\tfrac1n-(1-m)\)\(\mathsf L_\alpha\mathsf q\)^2\)u^m}\\
-\,2\,\frac{1-m}m\iwR{\frac{\mathrm Q[\mathsf p]}{r^4}\,u^m}-\,2\,\frac{1-m}m\,(n-2)\(\alpha_{\rm FS}^2-\alpha^2\)\iwR{\frac{|\nabla_\omega\mathsf p|^2}{r^4}\,u^m}\\
+\frac2\alpha\iwR{u\,x\,\cdot\D\,|z|^2}-\,\frac4\alpha\iwR{u\,z\cdot\D(z\cdot x)}\,.
\end{multline*}
Since $x\cdot\D=\alpha\,r\,\partial_r$, $x\cdot z=\alpha\,r\partial_r\mathsf q$, $x\cdot\nabla_\omega=0$, and $z\cdot\partial_rz=z\cdot\partial_r(\D\mathsf q)=z\cdot\D\partial_r\mathsf q-\frac1{r^2}\,|\nabla_\omega\mathsf q|^2$, we have that
\begin{multline*}
\frac2\alpha\iwR{u\,x\,\cdot\D\,|z|^2}-\,\frac4\alpha\iwR{u\,z\cdot\D(z\cdot x)}\\
=\iwR{\(4\,r\,u\,z\cdot\partial_r z-\,4\,r\,u\,z\cdot\D\partial_r\mathsf q-\,4\,\alpha\,u\,\partial_r\mathsf q\,(\omega\cdot z)\)}\\
=-\,4\iwR{u\(\alpha^2\,|\partial_r\mathsf q|^2+\frac{\left|\nabla_\omega\mathsf q\right|^2}{r^2}\)}=-\,4\iwR{u\,|z|^2}\,.
\end{multline*}
After observing that $\(\tfrac1n-(1-m)\)\,(\mathsf L_\alpha\mathsf q)^2=(m-m_1)\(\mathsf L_\alpha u^{m-1}-2\,n\)^2$, we conclude that
\begin{multline*}
\frac d{d\tau}\iwR{u\,|z|^2}+4\iwR{u\,|z|^2}\\
\le-\,2\,\frac{1-m}m\iwR{\(\alpha^4\(1-\tfrac1n\)\left[\mathsf p''-\frac{\mathsf p'}r-\frac{\Delta_\omega\,\mathsf p}{\alpha^2\,(n-1)\,r^2}\right]^2+\frac{2\,\alpha^2}{r^2}\left|\nabla_\omega\mathsf p'-\frac{\nabla_\omega\mathsf p}r\right|^2\)u^m}\\
-\,2\,\frac{1-m}m\,(m-m_1)\iwR{\(\mathsf L_\alpha u^{m-1}-2\,n\)^2\,u^m}\\
-\,2\,\frac{1-m}m\iwR{\frac{\mathrm Q[\mathsf p]}{r^4}\,u^m}-\,2\,\frac{1-m}m\,(n-2)\(\alpha_{\rm FS}^2-\alpha^2\)\iwR{\frac{|\nabla_\omega\mathsf p|^2}{r^4}\,u^m}\,.
\end{multline*}
We can extend the function $u$ outside $B_R$ by the function $\Balpha$ and pass to the limit as $R$ goes to $+\infty$. If now we consider a  solution of~\eqref{Eqn} on $\R^d$ and if $\mathsf p:=u^{m-1}$, then we have
\begin{multline*}
\frac d{d\tau}\iwrd{u\,|z|^2}+4\iwrd{u\,|z|^2}\\
\le-\,2\,\frac{1-m}m\iwrd{\(\alpha^4\(1-\tfrac1n\)\left[\mathsf p''-\frac{\mathsf p'}r-\frac{\Delta_\omega\,\mathsf p}{\alpha^2\,(n-1)\,r^2}\right]^2+\frac{2\,\alpha^2}{r^2}\left|\nabla_\omega\mathsf p'-\frac{\nabla_\omega\mathsf p}r\right|^2\)u^m}\\
-\,2\,\frac{1-m}m\,(m-m_1)\iwrd{\(\mathsf L_\alpha u^{m-1}-2\,n\)^2\,u^m}\\
-\,2\,\frac{1-m}m\iwrd{\frac{\mathrm Q[\mathsf p]}{r^4}\,u^m}-\,2\,\frac{1-m}m\,(n-2)\(\alpha_{\rm FS}^2-\alpha^2\)\iwrd{\frac{|\nabla_\omega\mathsf p|^2}{r^4}\,u^m}\,.
\end{multline*}
This inequality implies~\eqref{CKN} in a non scale-invariant form (as in Section~\ref{Sec:RescaledVariables} when there are no weights), but also provides an additional integral remainder term. With $\P=\frac m{1-m}\,g^{m-1}$ and $v(t,x)=g\(t,r^{\alpha-1}x\)$, $r=|x|$, let us define
\begin{multline*}
\mathsf R[v]:=\iwrd{g^m\(\alpha^4\(1-\tfrac1n\)\left|\P''-\frac{\P'}r-\frac{\Delta_\omega\P}{\alpha^2\,(n-1)\,r^2}\right|^2+\frac{2\,\alpha^2}{r^2}\,\left|\nabla_\omega\P'-\frac{\nabla_\omega\P}r\right|^2\)}\\
+\iwrd{g^m\((n-2)\(\alpha_{\rm FS}^2-\alpha^2\)\frac{\left|\nabla_\omega\P\right|^2}{r^4}+\frac{b^2}{r^4}\,\frac{\left|\nabla_\omega\P\right|^4}{|\P|^2}\)}\,.
\end{multline*}
\begin{theorem}\label{Thm:Monotonicity} Let $d\ge2$. Under Condition~\eqref{parameters}, if
\[
\mbox{either}\quad\beta\le\beta_{\rm FS}(\gamma)\quad\forall\,\gamma\le0\,,\quad\mbox{or}\quad\gamma>0\,,
\]
then then there are two positive constants $\mathcal C_1$, $\mathcal C_2$, and a constant $b$, depending only on $\beta$, $\gamma$ and $d$ such that the following property holds.

Assume that~$v_0$ satisfies $\nrm{v_0}{1,\gamma}=M_\star$ and that there exist two positive constants $C_1$ and $C_2$ such that~\eqref{entredeuxBarenblatt} holds. Let us consider a positive solution of~\eqref{FD} with initial datum $v_0$ such that $v$ is \emph{smooth at $x=0$} for any $t\ge0$. Then, with the above notations, we have
\be{BestInequality}
\mathsf G[v(t,\cdot)]-\,\mathsf G[v_\star]\ge\mathcal C_1\int_t^\infty\mathsf h(s)^{3\,\mu-2}\,\mathsf R[v(s,\cdot)]\,ds\\
+\mathcal C_2\,\int_0^t\mathsf h(s)^{3\,\mu-2}\,\iwrd{\left|\mathsf L_\alpha\P-\,2\,n\,\mathsf h(s)^{-\mu}\right|^2}\;ds
\ee
for any $t\ge0$. Here $\mu$ and $\mathsf h$ are given by~\eqref{hmu}.\end{theorem}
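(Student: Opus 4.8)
We would prove Theorem~\ref{Thm:Monotonicity} by upgrading the ball computation of Section~\ref{Sec:DirectWeigthed} to the whole space and then re-reading it through the R\'enyi-entropy-power functional $\mathsf G$, keeping track of the weights via the artificial dimension $n$ and the operators $\D,\Dstar,\mathsf L_\alpha$. Section~\ref{Sec:DirectWeigthed} already established, for the solution $u$ of~\eqref{Eqn} on $B_R$ with no-flux boundary conditions,
\[
\frac d{d\tau}\iwR{u\,|z|^2}+4\iwR{u\,|z|^2}\le-\,2\,\tfrac{1-m}m\(\mathsf S_R[u]+(m-m_1)\iwR{\(\mathsf L_\alpha u^{m-1}-2\,n\)^2u^m}\)\,,
\]
where $\mathsf S_R[u]\ge0$ precisely when $\alpha\le\alpha_{\rm FS}$, i.e. when $\beta\le\beta_{\rm FS}(\gamma)$ for $\gamma\le0$ or $\gamma>0$ (the positivity of the $\mathrm Q[\mathsf p]$ and $(\alpha_{\rm FS}^2-\alpha^2)\,|\nabla_\omega\mathsf p|^2$ contributions being~\cite[Lemmas~4.2 and~4.3]{DELM}, with $d=2$ treated separately).

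\emph{From the ball to $\R^d$.} The first task is the limit $R\to+\infty$. By the comparison principle for~\eqref{FD}, the two-sided bound~\eqref{entredeuxBarenblatt} is propagated by the flow (see~\cite{1602}), so in self-similar variables $u$ stays trapped between two Barenblatt profiles uniformly in $\tau$; together with parabolic regularity on $\R^d\setminus B_\varepsilon$ this yields the uniform decay of $u$ and its derivatives needed to make every integral in Section~\ref{Sec:DirectWeigthed} — including the boundary integrals on $\partial B_R$ — finite and to justify each integration by parts. The outer boundary terms then vanish as $R\to+\infty$, the one term that survives at finite $R$, $\int_{\partial B_R}u^m\,(\omega\cdot\D|z|^2)\,d\sigma$, has the favourable sign as in Section~\ref{Sec:RescaledVariables}, and the inner boundary terms on $\partial B_\varepsilon$ vanish as $\varepsilon\to0$ because $v$ is assumed smooth at $x=0$. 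This gives the same inequality on $\R^d$ for $\mathcal I[u]:=\iwrd{u\,|z|^2}$.

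\emph{Relative entropy, time integration, and passage to $\mathsf G$.} With $\mathcal E[u]:=-\tfrac1m\iwrd{\(u^m-\Balpha^m-m\,\Balpha^{m-1}(u-\Balpha)\)}$ one has $\tfrac d{d\tau}\mathcal E[u]=-\mathcal I[u]$, hence $\mathcal I[u]-4\,\mathcal E[u]$ is nonincreasing along the flow and, since $u(\tau,\cdot)\to\Balpha$ as $\tau\to+\infty$ (\cite{1602}, using~\eqref{entredeuxBarenblatt} and $\nrm{v_0}{1,\gamma}=M_\star$), tends to $0$; integrating the inequality above from $\tau(t)$ to $+\infty$ bounds $\mathcal I[u(\tau(t),\cdot)]-4\,\mathcal E[u(\tau(t),\cdot)]$ below by the full time-integral of the nonnegative remainder. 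To restate this for $\mathsf G$: since $\mathsf G$ is scale invariant, $\mathsf G[v(t,\cdot)]$ equals the same functional evaluated on $u(\tau(t),\cdot)$; expanding $\iwrd{u\,|\D u^{m-1}|^2}$ in terms of $\mathcal I[u]$, $\mathsf E[u]=\iwrd{u^m}$ and the weighted second moment $\iwrd{u\,|x|^2}$ via the integration-by-parts identities of Section~\ref{Sec:Renyi=BE}, and using $\sigma=\mu/(n\,(1-m))$, the terms quadratic in $\mathsf E[u]$ and the Fisher information collapse into a perfect square which, by the Cauchy--Schwarz relation between $\iwrd{u^m\,(\mathsf L_\alpha u^{m-1}-2\,n)^2}$, $\iwrd{u^m}$ and $\iwrd{u^m\,(\mathsf L_\alpha u^{m-1}-2\,n)}$, is compensated by the $(m-m_1)$-term above. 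Pulling everything back through~\eqref{TDRS-weight} and the angular change $v(t,x)=g(t,|x|^{\alpha-1}x)$ produces the Jacobian weights $\mathsf h(s)^{3\,\mu-2}$ and splits the remainder into the symmetry part $\mathcal C_1\int_t^\infty\mathsf h(s)^{3\mu-2}\,\mathsf R[v(s,\cdot)]\,ds$ — controlled for $s\to+\infty$ because $\mathsf R$ vanishes on Barenblatt profiles — and the nonlinearity part $\mathcal C_2\int_0^t\mathsf h(s)^{3\mu-2}\iwrd{|\mathsf L_\alpha\P-2\,n\,\mathsf h(s)^{-\mu}|^2}\,ds$, which records the accumulated gap between the $\mathsf G$–monotonicity and the rescaled-variable monotonicity (a best-matching-Barenblatt correction), $2\,n\,\mathsf h(s)^{-\mu}$ being the value of $\mathsf L_\alpha\P$ on $v_\star$. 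Choosing $\mathcal C_1,\mathcal C_2>0$ and $b$ small enough, depending only on $\beta,\gamma,d$, to absorb the constants produced along the way yields~\eqref{BestInequality}.

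\emph{Main difficulty.} The genuine obstacle is the passage from the ball to $\R^d$: obtaining decay and regularity estimates for the ball solutions that are uniform in $R$ and strong enough both to justify every integration by parts of Section~\ref{Sec:DirectWeigthed} and to annihilate the outer boundary terms; the behaviour of the weighted solution near $x=0$ is not resolved here, which is exactly why the statement is conditioned on smoothness at the origin. A secondary point is the careful bookkeeping of the two changes of variables needed to obtain the precise powers of $\mathsf h$ and the two-integral form of the remainder.
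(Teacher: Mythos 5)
Your strategy is the same as the paper's: one takes the differential inequality derived in Section~\ref{Sec:DirectWeigthed} on the ball $B_R$, discards the favorably-signed boundary term $\int_{\partial B_R}u^m\,(\omega\cdot\D|z|^2)\,d\sigma$, passes to the limit $R\to+\infty$, and then undoes the changes of variables $v(t,x)=g(t,|x|^{\alpha-1}x)$ and~\eqref{TDRS-weight}, using the observation $\kappa\,R(t)=\mathsf h(t)$ to read off the $\mathsf h(s)^{3\mu-2}$ weight. The paper's own proof is literally this observation plus the instruction to undo the changes of variables, so you are correctly spelling out what that instruction entails; and you rightly identify the uniform-in-$R$ justification of the integrations by parts as the genuine difficulty.

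Two imprecisions are worth flagging, neither of which undermines the structure of the argument. First, the constants $\mathcal C_1$, $\mathcal C_2$, $b$ are not to be ``chosen small enough to absorb the constants produced along the way'': they are explicitly determined by the algebra, as the remark after the theorem statement (pointing to~\cite{DELM}) makes clear; choosing them small would weaken the statement. Second, the sentence about the quadratic-in-$\mathsf E$ terms ``collapsing into a perfect square which is compensated by the $(m-m_1)$-term'' is confusing. The $(m-m_1)$-term is not cancelled; after undoing the changes of variables it is precisely the second integrand $|\mathsf L_\alpha\P-2n\,\mathsf h(s)^{-\mu}|^2$ in~\eqref{BestInequality}, and it is kept as a separate nonnegative contribution, not absorbed by Cauchy--Schwarz into anything else. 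The best-matching shift you allude to — centering $\mathsf L_\alpha\mathsf p$ at its $u^m$-weighted average rather than at $2n$ — is what distinguishes the R\'enyi remainder~\eqref{R} from $\mathcal R_\star$ in the unweighted comparison of Section~\ref{Sec:Renyi=BE}, and it is a genuine point, but Theorem~\ref{Thm:Monotonicity} as stated retains the fixed centering $2n\,\mathsf h^{-\mu}$; so the Cauchy--Schwarz step you invoke is not needed to reach~\eqref{BestInequality}.
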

The expressions of the constants $b$, $\mathcal C_1$ and $\mathcal C_2$ are explicit. See~\cite{DELM} for details. Since $\alpha<\alpha_{\rm FS}$ is equivalent to $\beta<\beta_{\rm FS}$ and
\[
\mathsf R[g]\ge(n-2)\(\alpha_{\rm FS}^2-\alpha^2\)\iwrd{g^m\,\frac{\left|\nabla_\omega\P\right|^2}{r^4}}\,,
\]
Theorem~\ref{Thm:Monotonicitylight} is a straightforward consequence of Theorem~\ref{Thm:Monotonicity}. In the opposite direction, by keeping all terms in $\mathsf Q[\mathsf p]$, it is possible to give a sharper estimate than~\eqref{BestInequality}, which has however no simple expression.

Under the above assumptions, Theorem~\ref{Thm:Rigidity} is a consequence of Theorem~\ref{Thm:Monotonicity}. Indeed, if we take $w^{2p}=v_0$, then we know that $\frac d{dt}\mathsf G[v(t,\cdot)]=0$ at $t=0$ because $v_0$ is a critical point of $\mathsf G$ under the mass constraint $\nrm{v_0}{1,\gamma}=M_\star$. Hence we know that
\[
0=\frac d{dt}\mathsf G[v(t,\cdot)]_{|t=0}\le-\,\mathsf R[v_0]\le0
\]
by differentiating~\eqref{BestInequality} at $t=0$, so that $\mathsf R[v_0]=0$, and this is enough to conclude. We can also notice that~\eqref{BestInequality} implies~\eqref{CKN} simply by dropping the right hand side and using a density argument, if necessary.

\begin{proof}[Proof of Theorem~\ref{Thm:Monotonicity}] For any solution $v$ of~\eqref{FD}, we apply the above computations with  $v(t,x)=g\(t,|x|^{\alpha-1}x\)$ and $u$ given by~\eqref{TDRS-weight}. Let us observe that
\[
\kappa\,R(t)=\mathsf h(t)
\]
if $R$ and $\mathsf h$ are given by~\eqref{hmu} and~\eqref{TDRS-weight}. It is then enough to undo the above changes of variables to obtain~\eqref{BestInequality}.\end{proof}

\section{Linearization and optimality}\label{Sec:Lin}

\subsection{The linearized fast diffusion flow and the spectral gap}

Let us perform a formal linearization of~\eqref{Eqn} around the Barenblatt profile $\Balpha$ by considering a solution $u_\varepsilon$ with mass $\ird{u_\varepsilon}=M_\star$ such that
\[
u_\varepsilon=\Balpha\(1+\varepsilon\,f\,\Balpha^{1-m}\)\,,
\]
and by taking formally the limit as $\varepsilon\to0$. We obtain that $f$ solves
\[
\frac{\partial f}{\partial t}=\mathcal L_\alpha\,f\quad\mbox{where}\quad\mathcal L_\alpha\,f:=(m-1)\,\Balpha^{m-2}\,\Dstar\(\Balpha\,\D f\)\,.
\]
We define the scalar products
\[
\scal{f_1}{f_2}=\iwrd{f_1\,f_2\,\Balpha^{2-m}}\quad\mbox{and}\quad\Scal{f_1}{f_2}=\iwrd{\D f_1\cdot\D f_2\,\Balpha}
\]
which correspond to weighted spaces, respectively of $\mathrm L^2$ and $\dot{\mathrm H}^1$ type. It is straightforward to check that the mass constraint results in the orthogonality condition
\[
\scal f1=0\,,
\]
that constant functions span the kernel of $\mathcal L_\alpha$, and that $\mathcal L_\alpha$ is self-adjoint on the space
\[
\mathcal X:=\mathrm L^2\big(\R^d,\,\Balpha^{2-m}\,d\mu_n\big)
\]
with norm given by $\|f\|^2=\scal ff$. Moreover,
\[
\frac12\,\frac d{dt}\scal ff=\scal f{\mathcal L_\alpha\,f}=\iwrd{f\,(\mathcal L_\alpha\,f)\,\Balpha^{2-m}}=-\iwrd{|\D f|^2\,\Balpha}=-\,\Scal ff
\]
if $f$ belongs to the subspace
\[
\mathcal Y:=\left\{f\in\mathrm L^2\big(\R^d,\,\Balpha^{2-m}\,d\mu_n\big)\,:\,\Scal ff<+\infty\right\}\,,
\]
and also
\[
\frac12\,\frac d{dt}\Scal ff=\iwrd{\D f\cdot\D(\mathcal L_\alpha\,f)\,\Balpha}=-\,\Scal f{\mathcal L_\alpha\,f}
\]
if $f$ is smooth enough.

Now let us consider the smallest positive eigenvalue $\lambda_1$ of $\mathcal L_\alpha$ on $\mathcal X$. An eigenfunction associated with $\lambda_1$ solves the eigenvalue equation
\[
-\,\mathcal L_\alpha\,f_1=\lambda_1\,f_1\,.
\]
According to~\cite{1602}, we know that $f_1\in\mathcal Y\subset\mathcal X$, so that
\[
\Scal{f_1}{f_1}=-\,\scal{f_1}{\mathcal L_\alpha\,f_1}=\lambda_1\,\scal{f_1}{f_1}
\]
and $f_1$ yields the equality case in the \emph{Hardy-Poincar\'e inequality}
\[
\Scal gg=-\,\scal g{\mathcal L_\alpha\,g}\ge\lambda_1\,\|g-\bar g\|^2\quad\forall\,g\in\mathcal Y\,.
\]
Here $\bar g:=\scal g1/\scal 11$ denotes the average of $g$. It turns out that
\[
\Scal{f_1}1=0\quad\mbox{and}\quad-\,\Scal{f_1}{\mathcal L_\alpha\,f_1}=\lambda_1\,\Scal{f_1}{f_1}
\]
so that $f_1$ is also optimal for the higher order inequality
\[
-\,\Scal g{\mathcal L_\alpha\,g}\ge\lambda_1\,\Scal gg
\]
written for any function $g\in\mathcal Y$ such that $\Scal g{\mathcal L_\alpha\,g}$ is finite. The proof of the inequality itself is a simple consequence of the expansion of the square
\[
-\,\Scal{(g-\bar g)}{\mathcal L_\alpha\,(g-\bar g)}=\scal{\mathcal L_\alpha\,(g-\bar g)}{\mathcal L_\alpha\,(g-\bar g)}=\|\mathcal L_\alpha\,(g-\bar g)\|^2\ge0\,.
\]
See~\cite{BBDGV,BDGV} and~\cite{MR1982656,MR2126633} for more details on the results in $\mathcal X$ and $\mathcal Y$ respectively. It has been observed in~\cite{BDGV} that the operator $\mathcal L_\alpha$ on $\mathcal X$ and its restriction to $\mathcal Y$ are unitarily equivalent when $(\alpha,n)=(1,d)$ and the extension to the general case is straightforward. The kernel of $\mathcal L_\alpha$ is generated by $f_0(x)=1$, and the eigenspaces corresponding to the next two eigenvalues are generated by $f_{1,k}(x)=x_k$ and $f_2(x)=|x|^2-c$, for some explicit constant $c$. If $(\beta,\gamma)=(0,0)$, the eigenvalues $\lambda_1$ and $\lambda_2$ are strictly ordered if $1-1/d<m<1$ and coincide if $m=1-1/d$, but the spectrum is more complicated in the general case: see~\cite[Appendix~B]{1602} for details. The key observation for our analysis is the fact that
\be{Lambda_FS}
\lambda_1\ge4\quad\Longleftrightarrow\quad\alpha\le\alpha_{\rm FS}:=\sqrt{\frac{d-1}{n-1}}\,.
\ee

\subsection{The optimality cases in the functional inequalities}

\subsubsection{Symmetry breaking in Caffarelli-Kohn-Nirenberg inequalities} It has been shown in~\cite{DELM} that the best constant in~\eqref{CKN} is determined by the infimum of
\[
\mathcal J[w]:=\vartheta\,\log\(\nrm{\D w}{2,\delta}\)+(1-\vartheta)\,\log\(\nrm w{p+1,\delta}\)-\log\(\nrm w{2p,\delta}\)
\]
with $\delta=d-n$ and that symmetry holds if $\mathcal J[w]\ge\mathcal J[w_\star]$ for any $w\in\mathrm H^p_{\delta,\delta}(\R^d)$, where $w_\star(x)=(1+|x|^2/\alpha^2)^\frac1{p-1}$. As a result of~\cite[Section~2.4]{1602}, we have that
\[
\mathcal J[w_\star+\varepsilon\,g]=\varepsilon^2\,\mathcal Q[g]+o(\varepsilon^2)\,,
\]
where
\begin{multline*}
\frac2\vartheta\,\nrm{\D w_\star}{2,\delta}^2\,\mathcal Q[g]=\nrm{\D g}{2,\delta}^2+\frac{p\,(2+\beta-\gamma)}{(p-1)^2}\,\big[d-\gamma-p\,(d-2-\beta)\big]\iwrd{|g|^2\,\frac1{1+\alpha^{-2}\,|x|^2}}\\
-p\,(2\,p-1)\,\frac{(2+\beta-\gamma)^2}{(p-1)^2}\iwrd{|g|^2\,\frac1{\(1+\alpha^{-2}\,|x|^2\)^2}}
\end{multline*}
is a nonnegative quadratic form if and only if $\alpha\le\alpha_{\rm FS}$ according to~\eqref{Lambda_FS}. Symmetry breaking therefore holds if $\alpha>\alpha_{\rm FS}$.

\subsubsection{An estimate on the information -- production of information inequality}\label{Sec:I-IP} As shown in Section~\ref{Sec:DirectWeigthed}, we define $\mathcal I[u]:=\iwrd{u\,|z|^2}$ where $z(\tau,x):=\D u^{m-1}-\,\frac2\alpha\,x$ and $\mathcal K[u]$ so that
\[
\frac d{d\tau}\mathcal I[u(\tau,\cdot)]=-\,\mathcal K[u(\tau,\cdot)]
\]
if $u$ solves~\eqref{Eqn}. If $\alpha\le\alpha_{\rm FS}$, then $\lambda_1\ge4$ and
\[
u\mapsto\frac{\mathcal K[u]}{\mathcal I[u]}-4
\]
is a nonnegative functional whose minimizer is achieved by $u=\Balpha$. With $u_\varepsilon=\Balpha\(1+\varepsilon\,f\,\Balpha^{1-m}\)$, we observe that
\[
4\le\mathcal C_2:=\inf_u\frac{\mathcal K[u]}{\mathcal I[u]}\le\lim_{\varepsilon\to0}\inf_f\frac{\mathcal K[u_\varepsilon]}{\mathcal I[u_\varepsilon]}=\inf_f\frac{\Scal f{\mathcal L_\alpha\,f}}{\Scal ff}=\frac{\Scal {f_1}{\mathcal L_\alpha\,f_1}}{\Scal {f_1}{f_1}}=\lambda_1\,.
\]
Of course one has to take some precautions ensuring that the mass is normalized and that denominator in the above quotients is never zero. Summarizing, what we observe is that the infimum of $\mathcal K/\mathcal I$ is achieved in the asymptotic regime as $u\to\Balpha$ and determined by the spectral gap of $\mathcal L_\alpha$ when $\lambda_1=4$, and that $\mathcal K/\mathcal I\ge4$ if $\lambda_1\ge4$, that is, when $\alpha=\alpha_{\rm FS}$ and $\alpha\le\alpha_{\rm FS}$ respectively.

\subsubsection{Symmetry in Caffarelli-Kohn-Nirenberg inequalities}\label{Sec:SymCKN} If $\alpha\le\alpha_{\rm FS}$, the fact that $\mathcal K/\mathcal I\ge4$ has an important consequence. Indeed we know that
\[
\frac d{d\tau}\(\mathcal I[u(\tau,\cdot)]-\,4\,\mathcal E[u(\tau,\cdot)]\)\le0
\]
where
\[
\mathcal E[u]:=-\,\frac1m\iwrd{\(u^m-\,\Balpha^m-\,m\,\Balpha^{m-1}\,(u-\,\Balpha)\)}\,,
\]
so that
\[
\mathcal I[u]-\,4\,\mathcal E[u]\ge\mathcal I[\Balpha]-\,4\,\mathcal E[\Balpha]=0\,.
\]
This inequality is equivalent to $\mathcal J[w]\ge\mathcal J[w_\star]$, which establishes that optimality in~\eqref{CKN} is achieved among symmetric functions. In other words, the computations of Section~\ref{Sec:DirectWeigthed} show that for $\alpha\le\alpha_{\rm FS}$, the function
\[
\tau\mapsto\mathcal I[u(\tau,\cdot)]-\,4\,\mathcal E[u(\tau,\cdot)]
\]
is monotone decreasing. The condition $\alpha\le\alpha_{\rm FS}$ is a sufficient condition, which is however complementary of the symmetry breaking condition (because it depends only on the sign of $\lambda_1-4$), and this explains why the method based on nonlinear flows provides the optimal range for symmetry.

\subsubsection{Optimality of the information -- production of information inequality} From Section~\ref{Sec:I-IP}, we know that the infimum of $\mathcal K/\mathcal I$ is achieved in the asymptotic regime as $u\to\Balpha$ and determined by the spectral gap of $\mathcal L_\alpha$ when $\lambda_1=4$. This covers in particular the case without weights of the Gagliardo-Nirenberg inequalities~\eqref{GN} and of the fast diffusion equation~\eqref{poro} studied in Section~\ref{Sec:GN}.

We also know that
\[
\mathcal C_2=\inf_u\frac{\mathcal K[u]}{\mathcal I[u]}\le\lambda_1<4
\]
if $\alpha>\alpha_{\rm FS}$, and that
\[
\lambda_1\ge\mathcal C_2=\inf_u\frac{\mathcal K[u]}{\mathcal I[u]}>4
\]
if $\alpha<\alpha_{\rm FS}$. The inequality is strict because, otherwise, if $4$ was optimal, it would be achieved in the asymptotic regime and therefore would be equal to $\lambda_1>4$, a contradiction.

\subsubsection{Optimality of the entropy -- production of entropy inequality} Arguing as in Section~\ref{Sec:SymCKN}, we know that
\[
\mathcal I[u]-\,\mathcal C_2\,\mathcal E[u]\ge\mathcal I[\Balpha]-\,\mathcal C_2\,\mathcal E[\Balpha]=0\,.
\]
As a consequence, we have that
\[
\mathcal C_1:=\inf_u\frac{\mathcal I[u]}{\mathcal E[u]}\ge\mathcal C_2=\inf_u\frac{\mathcal K[u]}{\mathcal I[u]}\,.
\]
With $u_\varepsilon=\Balpha\(1+\varepsilon\,f\,\Balpha^{1-m}\)$, we observe that
\[
4\le\mathcal C_1\le\lim_{\varepsilon\to0}\inf_f\frac{\mathcal I[u_\varepsilon]}{\mathcal E[u_\varepsilon]}=\inf_f\frac{\scal f{\mathcal L_\alpha\,f}}{\scal ff}=\frac{\scal {f_1}{\mathcal L_\alpha\,f_1}}{\scal {f_1}f_1}=\lambda_1=\lim_{\varepsilon\to0}\inf_f\frac{\mathcal K[u_\varepsilon]}{\mathcal I[u_\varepsilon]}\,.
\]

If $\alpha=\alpha_{\rm FS}$, then $\lambda_1=4=\mathcal C_1=\mathcal C_2$. Again this covers in particular the case without weights of the Gagliardo-Nirenberg inequalities~\eqref{GN} and of the fast diffusion equation~\eqref{poro}. If $\alpha<\alpha_{\rm FS}$, then $\mathcal C_1\ge\mathcal C_2>4$. Conversely, if $\alpha>\alpha_{\rm FS}$, then $\mathcal C_1\le\lambda_1<4$. We know from~\cite{1602} that $\mathcal C_1>0$, and also that the optimal constant is achieved, but the precise value of $\mathcal C_1$ is so far unknown.


\medskip\noindent{\bf Acknowledgment:} This work has been partially supported by the Projects STAB and Kibord (J.D.) of the French National Research Agency (ANR). M.L. has been partially supported by NSF Grant DMS- 1600560 and the Humboldt Foundation. Part of this work was done at the Institute Mittag-Leffler during the fall program \emph{Interactions between Partial Differential Equations \& Functional Inequalities}.\\
\noindent{\copyright\,2016 by the authors. This paper may be reproduced, in its entirety, for non-commercial purposes.}


\bigskip\begin{center}
\rule{2cm}{0.5pt}
\end{center}\bigskip

\end{document}